\documentclass[11pt, reqno]{amsart}
\usepackage{amssymb, amstext, amscd, amsmath, amsthm}
\usepackage{color}
\usepackage{hyperref}

\usepackage{xy}
\xyoption{all}

%
%
\theoremstyle{plain}
\newtheorem{theorem}{Theorem}[section]
\newtheorem{thm}[theorem]{Theorem}
\newtheorem{cor}[theorem]{Corollary}
\newtheorem{prop}[theorem]{Proposition}
\newtheorem{lem}[theorem]{Lemma}
%
\theoremstyle{definition}
\newtheorem{rem}[theorem]{Remark}
\newtheorem{example}[theorem]{Example}

\newtheorem{defn}[theorem]{Definition}

\newtheorem{eg}[theorem]{Example}

\newtheorem{notation}[theorem]{Notation}





\newcommand{\bN}{{\mathbb{N}}}







\newcommand{\upchi}{{\raise.35ex\hbox{\ensuremath{\chi}}}}



\newcommand{\Iso}{\operatorname{Iso}}


\newcommand{\mt}{\varnothing}



\usepackage[normalem]{ulem}
\usepackage{color}

\begin{document}
\title[Cartan Subalgebras]{Cartan Subalgebras of Topological Graph Algebras and $k$-Graph $C^*$-algebras}
\author[J. Brown]{Jonathan Brown}
\address{Jonathan Brown, Department of Mathematics, University of Dayton, 300 College Park Dayton, OH 45469-2316, USA}
\email{jonathan.henry.brown@gmail.com}
\author[H. Li]{Hui Li}
\address{Hui Li, Research Center for Operator Algebras and Shanghai Key Laboratory of Pure Mathematics and Mathematical Practice, Department of Mathematics, East China Normal University, 3663 Zhongshan North Road, Putuo District, Shanghai 200062, China}
\address{Department of Mathematics $\&$ Statistics, University of Windsor, Windsor, ON N9B 3P4, CANADA}
\email{lihui8605@hotmail.com}

\author[D. Yang]{Dilian Yang}
\address{Dilian Yang,
Department of Mathematics $\&$ Statistics, University of Windsor, Windsor, ON
N9B 3P4, CANADA}
\email{dyang@uwindsor.ca}

\thanks{The second author is the corresponding author.}
\thanks{The second author was partially supported by Research Center for Operator Algebras of East China Normal University and supported by Science and Technology Commission of Shanghai Municipality (STCSM), grant No. 13dz2260400.
}
\thanks{
The third author was partially supported by an NSERC Discovery grant.}

\begin{abstract}
In this paper, two sufficient and necessary conditions are given. The first one characterizes when the boundary path groupoid of a topological graph without singular vertices has closed interior of its isotropy group bundle, and the second one characterizes when the path groupoid of a row-finite $k$-graph without sources has closed interior of its isotropy group bundle. It follows that the associated topological graph algebra and the associated $k$-graph C*-algebra have Cartan subalgebras due to a result of Brown-Nagy-Reznikoff-Sims-Williams.

\end{abstract}

\subjclass[2010]{46L05, 22A22}
\keywords{topological graph;  boundary path groupoid; $k$-graph; path groupoid; reduced groupoid $C^*$-algebra; Cartan subalgebra}

\maketitle

\section{Introduction}

In an effort to study general Cuntz-Krieger type uniqueness theorems, Brown, Nagy, Reznikoff, Sims, and Williams \cite{BNRSW}  found that for a locally compact Hausdorff \'etale groupoid $\Gamma$, the inclusion of the interior of isotropy $\mathrm{Iso}(\Gamma)^\mathrm{o}$ into $\Gamma$ induces an inclusion of the $C^*$-algebras  $C_r^*(\mathrm{Iso}(\Gamma)^\mathrm{o})\hookrightarrow C_r^*(\Gamma)$.  Furthermore they characterized when this inclusion is Cartan.

\begin{thm}[{\cite[Corollary~4.5]{BNRSW}}]\label{Cartan subalg iff closed}
Let $\Gamma$ be a locally compact \'{e}tale groupoid. Suppose that $\mathrm{Iso}(\Gamma)^\mathrm{o}$ is abelian. Then $\iota_r(C_r^*(\mathrm{Iso}(\Gamma)^\mathrm{o}))$ is a Cartan subalgebra of $C_r^*(\Gamma)$ if and only if $\mathrm{Iso}(\Gamma)^\mathrm{o}$ is closed. \end{thm}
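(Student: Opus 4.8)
The plan is to read the statement as the assertion that Renault's four defining properties of a Cartan subalgebra hold for $B := \iota_r\big(C_r^*(\mathrm{Iso}(\Gamma)^{\mathrm{o}})\big)$ exactly when $\Sigma := \mathrm{Iso}(\Gamma)^{\mathrm{o}}$ is closed. I first fix the ambient picture. Since $\Gamma$ is \'etale and $\Sigma$ is open, $\Sigma$ is an \'etale group bundle over $\Gamma^{(0)} = \Sigma^{(0)}$; its fibres being abelian, fibrewise convolution is commutative, so $C_r^*(\Sigma)$ is commutative and $B$ is an abelian subalgebra of $A := C_r^*(\Gamma)$, embedded isometrically by the map $\iota_r$ recalled in the introduction. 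Throughout I work with the injective coefficient map $j : A \to C_0(\Gamma)$ (available since the groupoids in the applications are Hausdorff), writing $\wh a := j(a)$, so that statements about an element of $A$ can be phrased as statements about the support of a genuine function on $\Gamma$.

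For the implication that closedness gives a Cartan subalgebra, assume $\Sigma$ is clopen and verify the axioms. The subalgebra $B$ contains $C_0(\Gamma^{(0)})$, which for an \'etale groupoid carries an approximate unit of $A$, giving the approximate-unit axiom. Clopenness of $\Sigma$ makes restriction $f \mapsto f|_{\Sigma}$ descend to a conditional expectation $P : A \to B$ with $\wh{P(a)} = \wh a|_{\Sigma}$; it is faithful because composing it with the faithful restriction expectation $C_r^*(\Sigma) \to C_0(\Gamma^{(0)})$ recovers the canonical faithful expectation $A \to C_0(\Gamma^{(0)})$. Regularity holds because functions supported on open bisections are total in $A$ and each such $f \in C_c(V)$ normalizes $B$: conjugation by $V$ induces a partial homeomorphism carrying $\mathrm{Iso}(\Gamma)$ to itself and preserving interiors, so $fBf^* \subseteq B$ and $f^*Bf \subseteq B$.

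The decisive axiom is that $B$ is maximal abelian, and here closedness enters through a support argument. If $a \in B' \cap A$ then $a$ commutes with $C_0(\Gamma^{(0)}) \subseteq B$, which forces $\wh a(\gamma) = 0$ whenever $r(\gamma) \neq s(\gamma)$ (separate $r(\gamma)$ and $s(\gamma)$ by a function on the Hausdorff unit space); thus the open set $\{\wh a \neq 0\}$ lies in $\mathrm{Iso}(\Gamma)$ and, being open, in $\Sigma = \mathrm{Iso}(\Gamma)^{\mathrm{o}}$. Hence $\operatorname{supp} \wh a \subseteq \overline{\Sigma}$, and closedness upgrades this to $\operatorname{supp} \wh a \subseteq \Sigma$; applying $P$ gives $\wh{a - P(a)} = 0$, so $a = P(a) \in B$ by injectivity of $j$. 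For the converse I would argue contrapositively: assuming $B$ is Cartan, the (then unique) faithful expectation must coincide with the coefficient-restriction map, so that its range $B$ consists, up to the usual completion, of the elements with $\operatorname{supp}\wh a \subseteq \Sigma$; a boundary point $\gamma_0 \in \overline{\Sigma} \setminus \Sigma \subseteq \mathrm{Iso}(\Gamma)$ would then produce an element whose coefficient function cannot be confined to $\Sigma$ yet lies in $B$, contradicting maximal abelianness. Hence no such $\gamma_0$ exists and $\Sigma$ is closed.

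I expect the converse, together with the rigor of the support bookkeeping, to be the main obstacle. The honest difficulty is that elements of the reduced algebra are only functions through $j$, so every claim about confining or failing to confine support to $\Sigma$ must be routed through this map and controlled uniformly near the boundary $\overline{\Sigma} \setminus \Sigma$; and whereas the forward maximal-abelian step needs only openness (for $P$ and for the inclusion) together with closedness (to keep support from escaping $\Sigma$), the converse must manufacture a genuine obstruction from a single boundary isotropy point, which is most cleanly obtained from the reconstruction of $A$ as a twisted groupoid algebra over the effective quotient $\Gamma/\Sigma$ rather than by hand. Disentangling exactly where openness and closedness are each used is the part of the argument that repays the most care.
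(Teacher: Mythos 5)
First, a structural point: the paper does not prove this statement at all --- Theorem~\ref{Cartan subalg iff closed} is quoted verbatim from \cite[Corollary~4.5]{BNRSW}, so there is no internal proof to compare against; your proposal has to be measured against the argument in that source. Your forward direction (closed $\Rightarrow$ Cartan) is essentially the argument of \cite{BNRSW} and is sound as a sketch: with $\Sigma:=\mathrm{Iso}(\Gamma)^{\mathrm{o}}$ clopen, restriction of coefficient functions gives the expectation, faithfulness follows by composing with the canonical expectation onto $C_0(\Gamma^{(0)})$, regularity comes from functions supported on open bisections (conjugation by a bisection carries $\Sigma$ into $\Sigma$ because it preserves isotropy and is open), and maximal abelianness follows from the support argument plus injectivity of $j$. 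The one real technical step you assert rather than prove is that restriction to a clopen subgroupoid descends from $C_c(\Gamma)$ to a contraction on $C_r^*(\Gamma)$; this requires a computation with the regular representation, but it is standard.

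The genuine gap is your converse, and it is not merely a matter of ``support bookkeeping.'' Your proposed contradiction --- a boundary point producing ``an element whose coefficient function cannot be confined to $\Sigma$ yet lies in $B$, contradicting maximal abelianness'' --- cannot occur and targets the wrong axiom. Since $j$ is norm-decreasing into the sup-norm and uniform limits of functions vanishing off $\Sigma$ still vanish off $\Sigma$, \emph{every} element of $B=\overline{C_c(\Sigma)}$ automatically has coefficient function vanishing off $\Sigma$; there is no such element to manufacture. Moreover, maximal abelianness is not what fails when $\Sigma$ is not closed: in \cite{BNRSW} the subalgebra is shown to be maximal abelian under the abelianness hypothesis alone (their Theorem~3.1), with no closedness assumption. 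What fails is the existence of a conditional expectation, and the actual mechanism is a continuity obstruction: one first shows that \emph{any} conditional expectation $P\colon C_r^*(\Gamma)\to B$ must satisfy $j(P(a))=j(a)\cdot 1_{\Sigma}$ (this uses bimodularity over $C_0(\Gamma^{(0)})$ and is itself a nontrivial lemma); then, choosing $\gamma_0\in\overline{\Sigma}\setminus\Sigma$ and $f\in C_c(\Gamma)$ supported in an open bisection with $f(\gamma_0)=1$, the function $j(f)\cdot 1_{\Sigma}$ takes values near $1$ on points of $\Sigma$ approaching $\gamma_0$ but equals $0$ at $\gamma_0$, so it is discontinuous at $\gamma_0$ --- contradicting that $j(P(f))$ must be continuous. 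Your alternative suggestion, reconstructing $C_r^*(\Gamma)$ as a twist over the quotient $\Gamma/\Sigma$, does not repair this: that quotient is precisely the object that becomes non-Hausdorff when $\Sigma$ is not closed, and no deduction of closedness from the Weyl-groupoid machinery is indicated. Until the displayed identity for $P$ and the discontinuity argument are supplied, the converse implication is unproved.
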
  This result has many applications. For example, it induces the Cuntz-Krieger uniqueness theorem \cite{KumjianPaskEtAl:PJM98} and the general Cuntz-Krieger uniqueness theorem \cite{Szyma'nski:IJM02}.

The motivations for Brown, et al. \cite{BNRSW}  originally came from graph algebras and their generalizations.  Indeed, many algebras coming from graphs have groupoid models, including:
\begin{itemize}
\item directed graphs \cite{KumjianPaskEtAl:JFA97},
\item topological graphs \cite{KumjianLiTwisted2}, and
\item $k$-graphs \cite{KP00}.
\end{itemize}
The difficulty of applying the full power of Theorem \ref{Cartan subalg iff closed} to these situations, is that it is unclear when the interior of the isotropy is closed in the associated groupoid.  Brown, et al. show in \cite{BNRSW} using indirect means that the interior of the isotropy is always closed in the groupoid associated to directed graphs, but they also provide an example \cite[Example~4.7]{BNRSW} in which the interior of the isotropy group bundle of the path groupoid of the $k$-graph is not closed.  Yang provided some partial results for $k$-graph $C^*$-algebras
in \cite{Yan15}, and proved the sufficiency of Theorem \ref{Cartan subalg iff closed} for $k$-graph $C^*$-algebras in \cite{YangPAMS}. 
However, there are currently no conditions intrinsic to a $k$-graph that show the associate groupoid satisfies the conditions of Theorem \ref{Cartan subalg iff closed}.

In this paper, we investigate  examples of locally compact \'{e}tale groupoids arising from directed graphs, topological graphs and higher-rank graphs. We determine when they have closed interior of the isotropy group bundle. The paper is organized as follows. In Section~2, we briefly review the background of groupoid $C^*$-algebras, graph algebras, topological graph algebras, and $k$-graph $C^*$-algebras. Recently, Kumjian and Li in \cite{KumjianLiTwisted2} proved that for a topological graph algebra without singular vertices, its associated $C^*$-algebra is isomorphic to the reduced groupoid $C^*$-algebra of the boundary path groupoid of the underlying graph. In Section~\ref{TG} we study the groupoid of a topological graph.  We produce an example (Example~\ref{ex1}) that shows the boundary path groupoid of a topological graph does not have closed interior of its isotropy group bundle in general. We then provide a sufficient and necessary condition, under which the boundary path groupoid of a topological graph without singular vertices has closed interior of its isotropy group bundle. In Section~\ref{kG} we find a sufficient and necessary condition, under which the path groupoid of a row-finite $k$-graph without sources has closed interior of its isotropy group bundle.  Finally, in Appendix~\ref{DG}, we present a short and direct proof that the graph groupoid of a row-finite directed graph without sources always has closed interior of its isotropy group bundle. This theorem can be inferred from either Section~\ref{TG} or \ref{kG}, but we include it here because of its relative simplicity.

\section{Preliminaries}

Throughout this paper, all topological spaces are assumed to be second countable; all locally compact groupoids are assumed to be second-countable locally compact Hausdorff groupoids. 
By $\mathbb{N}$ (resp. $\bN_+$), we denote the set of all nonnegative (resp. positive) integers. 
Denote by $H$ the infinite-dimensional separable Hilbert space.

\subsection{Groupoids}

A groupoid is a small category where every morphism has an inverse.  In this paper we deal exclusively with topological groupoids, that is a groupoid with a topology in which composition and inversion are continuous. For a groupoid $\Gamma$ let $\Gamma^0$ be the set of objects in $\Gamma$ which can be identified with the set of identity morphisms in $\Gamma$.  If $\Gamma$ is a groupoid there exist maps $r,s:\Gamma\to \Gamma^0$, where $r(\gamma)$ is the range of the morphism $\gamma$ and $s(\gamma)$ is the source of the morphism $\gamma$.  A locally compact groupoid is said to be \emph{\'{e}tale} if its range and source maps are both local homeomorphisms.

Let $\Gamma$ be a locally compact \'{e}tale groupoid. For $u \in \Gamma^0$, denote by $\Gamma_u:=s^{-1}(u)$; $\Gamma^u:=r^{-1}(u)$; and by $\Gamma_u^u:=\Gamma_u \cap \Gamma^u$ the \emph{isotropy group}. Define the \emph{isotropy group bundle} by $\mathrm{Iso}(\Gamma):=\bigcup_{u \in \Gamma^0}\Gamma_u^u$, which closed in $\Gamma$ and is a locally compact \'{e}tale subgroupoid of $\Gamma$. Moreover, $\Gamma$ is said to be \emph{essentially free} if the set of units whose isotropy groups are trivial is dense in $\Gamma^0$. Furthermore, the interior of $\mathrm{Iso}(\Gamma)$, denoted by $\mathrm{Iso}(\Gamma)^{\mathrm{o}}$, is open and a locally compact \'{e}tale subgroupoid of $\Gamma$.

\subsection{Topological Graphs}

Topological graphs were introduced in \cite{Katsura:TAMS04} as  a generalization of  graph algebras which adds topologies to the vertex and edge spaces. In this subsection, we review the background of topological graph algebras from \cite{Katsura:TAMS04, KumjianLiTwisted2}.

A \emph{topological graph} is a quadruple $E=(E^0,E^1,r,s)$ such that $E^0, E^1$ are locally compact Hausdorff spaces, $r:E^1 \to E^0$ is a continuous map, and $s:E^1\to E^0$ is a local homeomorphism. A  {\em directed graph} is a topological graph where $E^0$ and $E^1$ are countable and discrete. Let $E$ be a topological graph. A subset $U$ of $E^1$ is called an \emph{$s$-section} if $s\vert_{U}:U \to s(U)$ is a homeomorphism with respect to the subspace topologies. There are some specific subsets of $E^0$ analogous to the ones in directed graphs. The set of \emph{finite receivers} $E_{\mathrm{fin}}^0$ consists of all $v \in E^0$ which has an open neighborhood $N$ such that $r^{-1}(\overline{N})$ is compact. The set of \emph{sources} $E_{\mathrm{sce}}^0:=E^0 \setminus \overline{r(E^1)}$. The set of \emph{regular vertices} $E_{\mathrm{rg}}^0:=E_{\mathrm{fin}}^0 \setminus \overline{E_{\mathrm{sce}}^0}$. Moreover, the set of \emph{singular vertices} $E_{\mathrm{sg}}^0$ is defined to be the complement of $E_{\mathrm{rg}}^0$. The sets $E_{\mathrm{fin}}^0, E_{\mathrm{sce}}^0, E_{\mathrm{rg}}^0$ are all open, and the set $E_{\mathrm{sg}}^0$ is closed.

Let $E$ be a topological graph such that $E_{\mathrm{sg}}^0=\mt$. For $n \geq 2$, define
\[
E^n:=\Big\{\mu =(\mu_1,\dots,\mu_n)\in \prod_{i=1}^{n}E^1: s(\mu_i)=r(\mu_{i+1}), i=1,\dots,n-1 \Big\}
\]
endowed with the subspace topology of the product space $\prod_{i=1}^{n}E^1$. Define the \emph{finite-path space} by $E^*:=\coprod_{n=0}^{\infty} E^n$ with the disjoint union topology. Define the \emph{infinite path space} by
\[
E^\infty:=\Big\{x\in \prod_{i=1}^{\infty}E^1: s(x_i)=r(x_{i+1}), i=1,2,\dots \Big\}
\]
Denote the length of a path $\mu \in E^* \amalg E^\infty$ by $\vert\mu\vert$ and $\mu(n)=x_1\cdots x_n$ for $n\leq |\mu|$.  Endow $E^\infty$ with the subspace topology inherited from the  product space $\prod_{i=1}^{\infty}E^1$.  A basis for this topology consists of $Z(U):=\{x\in E^*: x(n)\in U\}$ where $U$ is open in $E^n$. The product topology on $E^\infty$ is locally compact Hausdorff by \cite[Definition~4.7, Lemma~4.8]{KumjianLiTwisted2} .   The one-sided shift map $\sigma:E^\infty \to E^\infty,\quad x_1x_2\cdots\mapsto x_2x_3\cdots$ is a local homeomorphism by \cite[Lemma~7.1]{KumjianLiTwisted2}.

\begin{defn}[{\cite[Definition~2.4]{Renault:00}}]\label{boundary-path gpoid}
Let $E$ be a topological graph such that $E_{\mathrm{sg}}^0=\mt$. Define the \emph{boundary path groupoid} by
\begin{align*}
\Gamma(E^\infty,\sigma):=\{(x,k-\ell,y) \in E^\infty \times \mathbb{Z} \times E^\infty: \sigma^{k}(x)=\sigma^\ell(y)\}.
\end{align*}
The range of $(x,m,y)$ is $x$ and its source is $y$, so  $\mathrm{Iso}(\Gamma(E^\infty, \sigma))=\{(x,k,x)\in \Gamma(E^\infty, \sigma)\}$.

For open subsets $U, V$ of $E^\infty$ satisfying that $\sigma^{k}$ is injective on $U$, and $\sigma^{\ell}$ is injective on $V$, denote by
\[
\mathcal{U}(U,V, k,\ell):=\{(x,k-\ell,y):x \in U, y\in V,\sigma^{k}(x)=\sigma^{\ell}(y)\}.
\]
The collection $\{\mathcal{U}(U,V,k,\ell)\}$ of subsets of $\Gamma(E^\infty,\sigma)$ as above forms an open base on $\Gamma(E^\infty,\sigma)$, and under this topology $\Gamma(E^\infty,\sigma)$ is a locally compact \'{e}tale groupoid.
\end{defn}

We give a characterization of convergent sequences in $\Gamma(E^\infty,\sigma)$. Fix a sequence $((x^t,n_t,y^t))_{i=1}^{\infty} \subset \Gamma(E^\infty,\sigma)$, and fix $(x,n,y) \in \Gamma(E^\infty,\sigma)$. Find $k,\ell \geq 0$ such that
\begin{enumerate}
\item $n=k-\ell, \sigma^{k}(x)=\sigma^{\ell}(y)$;
\item for $k',\ell' \geq 0$ satisfying that $k' \leq k, \ell' \leq \ell, n=k'-\ell',\sigma^{k'}(x)=\sigma^{\ell'}(y)$, then we have $k'=k, \ell'=\ell$.
\end{enumerate}
Then $(x^t,n_t,y^t) \to (x,n,y)$ if and only if $x^t \to x, y^t \to y$, and there exists $N \geq 1$ such that whenever $i \geq N$, we have $n_t=n$,  and $\sigma^{k}(x)=\sigma^{\ell}(y)$.

\subsection{$k$-Graphs}

$k$-graphs are generalizations of directed graphs introduced in \cite{KP00} where edges are replaced by hyper-rectangles of dimension less than or equal to $k$. In this subsection, we recall the background of $k$-graph $C^*$-algebras from \cite{KP00}.

\begin{notation}
Let $k \in \mathbb{N}_+$ and $n,m \in \mathbb{N}^k$. Denote $n \lor m:=(\max\{n_i,m_i\})_{i=1}^{k}$; and $n \land m:=(\min\{n_i,m_i\})_{i=1}^{k}$.
\end{notation}

\begin{defn}[{\cite[Definitions~1.1, 1.4]{KP00}}]
Let $k \in \mathbb{N}_+$. A small category $\Lambda$ is called a \emph{$k$-graph} if there exists a functor $d:\Lambda \to \mathbb{N}^k$ satisfying the \emph{factorization property}, that is, for $\mu\in\Lambda, n,m \in \mathbb{N}^k$ with $d(\mu)=n+m$, there exist unique $\nu,\alpha \in \Lambda$ such that $d(\nu)=n,d(\alpha)=m, s(\nu)=r(\alpha), \mu=\nu\alpha$. The functor $d$ is called the 
\emph{degree map} of $\Lambda$.  A 1-graph is a directed graph.

Let $(\Lambda_1,d_1), (\Lambda_2,d_2)$ be two $k$-graphs. A functor $f:\Lambda_1 \to \Lambda_2$ is called a \emph{morphism} if $d_2 \circ f=d_1$.

For $n \in \mathbb{N}^k$, denote by $\Lambda^n:=d^{-1}(n)$. For $A,B \subset \Lambda$, define $AB:=\{\mu\nu:\mu\in A,\nu\in B, s(\mu)=r(\nu)\}$. Then $\Lambda$ is said to be \emph{row-finite} if $\vert v\Lambda^{n}\vert<\infty$ for all $v \in \Lambda^0, n \in \mathbb{N}^k$. $\Lambda$ is said to be \emph{without sources} if $v\Lambda^{n} \neq \mt$ for all $v \in \Lambda^0, n \in \mathbb{N}^k$.
\end{defn}

\begin{example}[{\cite[Example~1.7 (ii)]{KP00}}]
Let $k \in \mathbb{N}_+$. Define $\Omega_k:=\{(p,q) \in \mathbb{N}^k \times \mathbb{N}^k: p \leq q\}$; define $\Omega_k^0:=\mathbb{N}^k$; define $r(p,q):=p$; define $s(p,q):=q$; and define $d(p,q):=q-p$. Then $(\Omega_k,\Omega_k^0,r,s)$ is a $k$-graph.
\end{example}

\begin{defn}[{\cite[Definitions~2.1, 2.4]{KP00}}]
Let $k \in \mathbb{N}_+$ and $\Lambda$ be a row-finite $k$-graph without sources. An \emph{infinite path} is a morphism from $\Omega_k$ to $\Lambda$: denote by $\Lambda^\infty$ the set of all infinite paths of $\Lambda$. For $x \in \Lambda^\infty$, for $p \in \mathbb{N}^k$, denote by $x(p):=x(0,p)$; denote by $\sigma^p(x)$ the unique element in $\Lambda^\infty$ such that $x=x(p)\sigma^p(x)$. For $\mu \in \Lambda$, denote by $Z(\mu):=\{\mu x: x \in \Lambda^\infty,s(\mu)=x(0)\}$. Endow $\Lambda^\infty$ with the topology generated by the basic open sets $\{Z(\mu):\mu \in \Lambda\}$.
\end{defn}

\begin{defn}[{\cite[Definition~2.7]{KP00}}]
Let $k \in \mathbb{N}_+$ and let $\Lambda$ be a row-finite $k$-graph without sources. Define the \emph{path groupoid} by
\begin{align*}
\mathcal{G}_\Lambda:=\{(x,p-q,y) \in \Lambda^\infty \times \mathbb{Z}^k \times \Lambda^\infty: p,q \in \mathbb{N}^k,\sigma^{p}(x)=\sigma^q(y)\}.
\end{align*}
The range of $(x,m,y)$ is $x$ and its source is $y$ so  $\mathrm{Iso}(\mathcal{G}_\Lambda)=\{(x,k,x)\in \mathcal{G}_\Lambda\}$.

For $\mu,\nu \in \Lambda$ with $s(\mu)=s(\nu)$, denote by $Z(\mu,\nu):=\{(\mu x,d(\mu)-d(\nu),\nu x):x \in \Lambda^\infty,s(\mu)=x(0)\}$. Endow $\mathcal{G}_\Lambda$ with the topology generated by the basic open sets $\{Z(\mu,\nu):\mu,\nu \in \Lambda,s(\mu)=s(\nu)\}$.
\end{defn}

By \cite[Proposition~2.8]{KP00}, $\mathcal{G}_\Lambda$ is a locally compact \'{e}tale groupoid and in particular each $Z(\mu,\nu)$ is a compact open bisection. Also $\Lambda^\infty$ is a locally compact Hausdorff space and each $Z(\mu)$ is a compact open set.

We give the characterization of convergent sequences in $\mathcal{G}_\Lambda$. Fix a sequence $((x^t,n,y^t))_{n=1}^{\infty} \subset \mathcal{G}_\Lambda$, and fix $(x,n,y) \in \mathcal{G}_\Lambda$. We have $(x^t, n,y^t) \to (x,n,y)$ if and only if for any $p,q \in \mathbb{N}^k$ satisfying that $p-q=n$ and $\sigma^p(x)=\sigma^q(y)$, there exists $N \geq 1$, such that $n \geq N \implies x^t(p)=x(p),y^t(q)=y(q), \sigma^p(x^t)=\sigma^q(y^t)$.

\section{Cartan Subalgebras of Topological Graph Algebras}
\label{TG}

In this section, we give a complete characterization of when a topological graph $E$ without singular vertices has closed interior of the isotropy group bundle of its boundary path groupoid.

The example shows that for a topological graph $E$ without singular vertices, ${\Iso(\Gamma(E^\infty, \sigma))}^{\mathrm{o}}$ is not closed in general.

\begin{eg}
\label{ex1}
Let
\[
E^0=E^1:=\{0\} \cup \left\{(\frac{1}{n},0),(-\frac{1}{n},0)\}_{n=1}^{\infty} \cup \{(-\frac{1}{n},\frac{1}{m}):m \geq n \geq 1\right\}
\]
 with the topology induced from $\mathbb{R}^2$. Define $r$ to be the identity map. Define $s$ to be the identity map on
 $\{0\} \cup \{(\frac{1}{n},0),(-\frac{1}{n},0)\}_{n=1}^{\infty}$. For $m \geq n \geq 1$, define $s(-\frac{1}{n},\frac{1}{m}):=(-\frac{1}{n},\frac{1}{m+1})$. Then $E$ is a topological graph with $E_{\mathrm{sg}}^0=\mt$.

Denote by $x:=0 0 0 \cdots$. Then $(x,1-0,x) \in \mathrm{Iso}(\Gamma(E^\infty,\sigma))$. For $n \geq 1$, denote by
\begin{align*}
x^n&:=\big(\frac{1}{n},0\big)\big(\frac{1}{n},0\big)\big(\frac{1}{n},0\big) \cdots; \\
y^n&:=\big(-\frac{1}{n},\frac{1}{n}\big) \big(-\frac{1}{n},\frac{1}{n+1}\big)\big(-\frac{1}{n},\frac{1}{n+2}\big)\cdots; \\
z^n:&=\big(-\frac{1}{n},\frac{1}{n+1}\big) \big(-\frac{1}{n},\frac{1}{n+2}\big)\big(-\frac{1}{n},\frac{1}{n+3}\big)\cdots.
\end{align*}
Notice that $(x^n,1-0,x^n) \in \mathrm{Iso}(\Gamma(E^\infty,\sigma))^{\mathrm{o}}$ and $(x^n,1-0,x^n) \to (x,1-0,x)$, which implies that $(x,1-0,x) \in \overline{\mathrm{Iso}(\Gamma(E^\infty,\sigma))^{\mathrm{o}}}$. On the other hand, $(y^n,1-0,z^n) \notin \mathrm{Iso}(\Gamma(E^\infty,\sigma))$ and $(y^n,1-0,z^n) \to (x,1-0,x)$. So $(x,1-0,x) \notin \mathrm{Iso}(\Gamma(E^\infty,\sigma))^{\mathrm{o}}$. Therefore ${\Iso(\Gamma(E^\infty, \sigma))}^{\mathrm{o}}$ is not closed.
\end{eg}

\begin{notation}\label{define C^n B^n A^n}
Let $E$ be a topological graph. For $n \geq 1$, denote by $C^n$ the set of cycles in $E^n$, which is a closed subset of $E^n$. For $k \geq 1$, $n \geq 1$, $\mu \in C^n$, and for an open neighborhood $N$ of $\mu$ denote by $k \mu:=\underbrace{\mu \cdots \mu}_{k}$; and denote by $k N:=\underbrace{N \times\dots\times N}_{k}$. Denote by $B^n$ the set of all cycles $\mu$ in $C^n$ satisfying that there exist $k \geq 1$ and an open neighborhood $N$ of $\mu$ such that
\begin{enumerate}
\item\label{no path connecting s(alpha),s(beta)} for any distinct $\alpha,\beta \in k N$, there are no paths in $N$ connecting $s(\alpha),s(\beta)$; and
\item\label{no cycles with entrance of base pt s(alpha)} for any $\alpha \in k N$, there are no cycles in $N$ with entrances and of base point $s(\alpha)$.
\end{enumerate}
\end{notation}

\begin{rem}
By Condition (2) of Notation~\ref{define C^n B^n A^n}, for each $n \geq 1$, each cycle in $B^n$ has no entrances.
\end{rem}

We show in Theorem~\ref{T:iff} below that $\text{Iso}(\Gamma(E^\infty, \sigma))^\circ$ is closed in $\Gamma(E^\infty, \sigma)$ if and only if $B^n$ is closed for all $n$. 

\begin{rem}
\label{R:dg}
If $E$ is a directed graph, i.e. a discrete topological graph then for a cycle $\mu$ in $E$ we can take $N=\{\mu\}$; in this case Condition~\eqref{no path connecting s(alpha),s(beta)} is vacuous and Condition~\eqref{no cycles with entrance of base pt s(alpha)} says that $\mu$ does not have an entrance.  So for directed graphs, $B^n$ consists of cycles of length $n$ without entrances.  Furthermore, since $E$ is discrete, $E^n$ is discrete and$B^n$ is closed. Therefore, by Theorem \ref{T:iff} below, $\text{Iso}(\Gamma(E^\infty, \sigma))^\circ$ is always closed.
\end{rem}

\begin{lem}\label{criterion of (mu,z,mu) in ISO^o}
Let $E$ be a topological graph such that $E_{\mathrm{sg}}^0=\mt$, and let $(x,n,x) \in \mathrm{Iso}(\Gamma(E^\infty,\sigma))$. Pick $p,q \geq 0$ with $p-q=n, \sigma^p(x)=\sigma^{q}(x)$.
\begin{enumerate}
\item If $n>0$, then $(x,n,x) \in \mathrm{Iso}(\Gamma(E^\infty,\sigma))^{\mathrm{o}}$ if and only if $x_{q+1}\cdots x_{p} \in B^n$.
\item   If $n<0$, then $(x,n,x) \in \mathrm{Iso}(\Gamma(E^\infty,\sigma))^{\mathrm{o}}$ if and only if $x_{p+1}\cdots x_{q} \in B^n$.
\end{enumerate}
\end{lem}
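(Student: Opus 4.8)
The plan is to prove only case (1); case (2) then follows by applying (1) to the inverse $(x,n,x)^{-1}=(x,-n,x)$, since inversion is a self-homeomorphism of $\Gamma(E^\infty,\sigma)$ carrying $\mathrm{Iso}$ onto $\mathrm{Iso}$ and hence $\mathrm{Iso}^{\mathrm{o}}$ onto $\mathrm{Iso}^{\mathrm{o}}$, and it interchanges the roles of $p$ and $q$, turning the word $x_{p+1}\cdots x_q$ of a negative element into the range word of a positive one. So assume $n>0$, i.e.\ $p>q\ge 0$, and set $\mu:=x_{q+1}\cdots x_p\in C^n$, noting that $\sigma^q(x)=\mu\mu\cdots$ is periodic of period $n$. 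The first genuine step is a sequential reformulation: since $\mathrm{Iso}(\Gamma(E^\infty,\sigma))$ is closed, $\mathrm{Iso}^{\mathrm{o}}=\Gamma(E^\infty,\sigma)\setminus\overline{\Gamma(E^\infty,\sigma)\setminus\mathrm{Iso}}$, so $(x,n,x)\in\mathrm{Iso}^{\mathrm{o}}$ if and only if $(x,n,x)$ is not a limit of elements of $\Gamma(E^\infty,\sigma)\setminus\mathrm{Iso}$. By the convergence criterion recorded after Definition~\ref{boundary-path gpoid}, this reads: $(x,n,x)\notin\mathrm{Iso}^{\mathrm{o}}$ iff there are $w^t\to x$, $z^t\to x$ with $w^t\ne z^t$ and $\sigma^{p}(w^t)=\sigma^{q}(z^t)$ for all large $t$ (after enlarging $p,q$ if needed). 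The relation $\sigma^{p}(w^t)=\sigma^{q}(z^t)$ means $\sigma^q(z^t)=\sigma^n(\sigma^q(w^t))$, so writing the successive length-$n$ blocks of $\sigma^q(w^t)$ as $a^t_1,a^t_2,\dots$, those of $\sigma^q(z^t)$ are $a^t_2,a^t_3,\dots$; and since $w^t,z^t\to x=x(q)\mu\mu\cdots$, for every fixed $M$ and all large $t$ the blocks $a^t_1,\dots,a^t_M$ lie in any preassigned neighbourhood $N$ of $\mu$.

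For necessity I argue contrapositively: supposing $\mu\notin B^n$, for every $k$ and every neighbourhood $N$ of $\mu$ at least one of Conditions (1), (2) of Notation~\ref{define C^n B^n A^n} fails, and I convert the failure into a sequence as above. If Condition (1) fails I get distinct $\alpha,\beta\in kN$ whose sources are joined by an $N$-path; splicing this $N$-path in between powers of (approximants of) $\mu$ and appending the tail $\mu\mu\cdots$ produces shift-related infinite paths $w^t\ne z^t$ converging to $x$. If instead Condition (2) fails I get a cycle $c\in N$ based at $s(\alpha)$ carrying an entrance, and that entrance provides the block at which $w^t$ peels away from the periodic pattern, again furnishing the sequence.

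For sufficiency I argue by contradiction: assume $\mu\in B^n$ with witnesses $k,N$, yet a sequence $w^t\ne z^t$ as above exists. Shrinking $N$ to an $s$-section makes each block in $N$ determined by its source vertex, so that from $s(a^t_{i})=r(a^t_{i+1})$ and $a^t_i,a^t_{i+1}\in N$ the block $a^t_i$ is recovered from $a^t_{i+1}$. Comparing the two length-$kn$ windows $a^t_i\cdots a^t_{i+k-1}$ and $a^t_{i+1}\cdots a^t_{i+k}$, whose sources $s(a^t_{i+k-1})$ and $s(a^t_{i+k})$ are joined by the single $N$-block $a^t_{i+k}$, the contrapositive of Condition (1) forces these windows to coincide; hence all blocks in range equal a single cycle $c^t\in N$. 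Then $w^t$ and $z^t$ can differ only through a later deviation from the pattern $c^tc^t\cdots$, and such a deviation is exactly an entrance to the cycle $c^t$ at the vertex $s(\alpha)$, which Condition (2) forbids. This contradiction yields $(x,n,x)\in\mathrm{Iso}^{\mathrm{o}}$.

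The main obstacle is the passage between the finite, local data encoded by Conditions (1), (2) and the infinite-path, topological data of the groupoid, and it concentrates in two places. First, the initial segment $x(q)$ is governed by the range map $r$, which is merely continuous rather than a local homeomorphism, so the first $q$ coordinates of $w^t,z^t$ need not equal $x(q)$ and must be controlled separately by continuity; keeping the paths constructed in the necessity step genuinely composable, genuinely infinite, and genuinely convergent to $x$ is the delicate part there. Second, in sufficiency the index at which $w^t$ first deviates from its periodic pattern may grow with $t$ and may occur only after the blocks have left $N$; pinning down this deviation while the relevant windows still lie in $N$ — so that Condition (1) or (2) can be invoked — requires a subsequence/compactness argument together with the convergence $a^t_i\to\mu$. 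Once these two points are settled, the block-shift bookkeeping above closes both implications.
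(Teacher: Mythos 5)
Your reduction of case (2) to case (1) by inversion is fine, and the \emph{sufficiency} half of your plan is essentially sound: it is the contrapositive, in sequential language, of the paper's computation inside the neighborhood $\mathcal{U}(Z(W'),Z(W),p+kn,q+kn)$, and the ``second obstacle'' you worry about there is not actually an obstacle. Once Condition~(1) of Notation~\ref{define C^n B^n A^n} forces the initial blocks of $\sigma^q(w^t)$ to coincide with a single cycle $c^t \in N$, a first deviation at block $j_t+1$ --- \emph{no matter how large $j_t$ is} --- is a path with range $s(c^t)$ differing from $c^t$, hence an entrance to the cycle $c^t$, which is a cycle in $N$ based at $s\bigl((c^t)^k\bigr)$ with $(c^t)^k \in kN$; this contradicts Condition~(2) with no subsequence or compactness argument (you do still need to rule out $w^t(q)\neq z^t(q)$, which an $s^q$-section around $x(q)$ does).

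The genuine gap is the ``if'' direction of your sequential reformulation, and it is exactly the direction your \emph{necessity} argument relies on. You assert that $(x,n,x)\notin\mathrm{Iso}(\Gamma(E^\infty,\sigma))^{\mathrm{o}}$ follows from the existence of $w^t\to x$, $z^t\to x$, $w^t\neq z^t$ with $\sigma^{p'}(w^t)=\sigma^{q'}(z^t)$ at some enlarged pair $(p',q')$. This is false: by the convergence criterion recorded after Definition~\ref{boundary-path gpoid}, convergence $(w^t,n,z^t)\to(x,n,x)$ forces the relation $\sigma^{k}(w^t)=\sigma^{\ell}(z^t)$ eventually at the \emph{minimal} pair $(k,\ell)$ for $(x,n,x)$, and a relation at larger exponents does not imply the relation at smaller ones, since $\sigma$ is not injective --- in a topological graph the intermediate edges of $w^t$ and $z^t$ can be distinct edges with equal sources. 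Concretely, the paths you build from a failure of Condition~(1), $w^t=P^t\alpha^t\gamma^tT^t$ and $z^t=Q^t\beta^tT^t$, satisfy the shift relation only at the exponents $(q+kn+n,\,q+kn)$; the relation at the minimal pair would require the alignment $\beta^t=\alpha^t_2\cdots\alpha^t_k\gamma^t$ (writing $\alpha^t_i$ for the length-$n$ blocks), which the mere failure of Condition~(1) does not give you. So your sequence of non-isotropy elements need not converge to $(x,n,x)$ in $\Gamma(E^\infty,\sigma)$, and non-interiority does not follow. (Your splicing step also cannot literally ``append the tail $\mu\mu\cdots$'', since $s(\gamma^t)$ is only close to, not equal to, $r(\mu)$; this composability issue you flag but do not resolve.) The paper's proof of necessity avoids sequences entirely: it takes a basic neighborhood $\mathcal{U}(Z(W'),Z(W),p,q)\subset\mathrm{Iso}(\Gamma(E^\infty,\sigma))$ with $W,W'$ products of $s$-sections, proves the equality $\mathcal{U}(Z(W'),Z(W),p,q)=\mathcal{U}(Z(W'),Z(W),m+n,m)$ using \cite[Lemma~7.1]{KumjianLiTwisted2} --- this is precisely the bridge between large and small exponents that your approach lacks --- and then shows that a failure of Condition~(1) or (2) for the pair $(k_0,N)$ read off from $W$ would place a non-isotropy element inside that neighborhood, a contradiction. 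Any repair of your necessity argument will need this ingredient (or an equivalent alignment argument), so the proposal as written does not constitute a proof.
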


\begin{proof}
We may assume that $n>0$ and for the case $n<0$ the argument is similar. Suppose that $(x,n,x) \in \mathrm{Iso}(\Gamma(E^\infty,\sigma))^{\mathrm{o}}$. Then there exist open subsets $U,V$ of $E^*$ such that $(x,n,x) \in \mathcal{U}(Z(U),Z(V),p,q) \subset \mathrm{Iso}(\Gamma(E^\infty,\sigma))$.  Since $U\subset E^{n_U}, V\subset E^{n_V}$ for some $n_U, n_V$, picking $M>\max\{p,n_U, n_V\}$ we see that for $m \geq M$ there exists an open subset $W \subset E^m$ such that $(x,n,x) \in \mathcal{U}(Z(W),Z(W),p,q) \subset \mathcal{U}(Z(U),Z(V),p,q)$. Fix $m \geq M$. Write $m=q+kn+l$, where $k \geq 1, 0 \leq l<n$. We may choose $W$  such that
\begin{align*}
W=(W_0 \times k(W_1 \times\dots\times W_n) \times (W_1 \times\dots\times W_l)) \cap E^m,
\end{align*}
where $W_0$ is an open neighborhood of $\mu_1 \cdots \mu_q; W_1 \times\cdots\times W_n$ is an open neighborhood of the cycle $\mu_{q+1} \cdots \mu_p; W_0,W_1,\ldots, W_n$ are open $s$-sections; and $r(W_1) \subset s(W_0)$. Let
\begin{align*}
W'=(W_0 \times (k+1)(W_1 \times\dots\times W_n) \times (W_1 \times\dots\times W_l))  \cap E^{m+n}.
\end{align*}
Then $\mathcal{U}(Z(W'),Z(W),p,q) \subset \mathcal{U}(Z(W),Z(W),p,q)$.

We claim that $\mathcal{U}(Z(W'),Z(W),p,q)=\mathcal{U}(Z(W'),Z(W),m+n,m)$.  First we show $\mathcal{U}(Z(W'),Z(W),m+n,m)\subset \mathcal{U}(Z(W'),Z(W),p,q)$. To see this, fix $(y,n,z) \in \mathcal{U}(Z(W'),Z(W),m+n,m)$. Then
\begin{equation}
\label{eq1}
\sigma^{m-q}(\sigma^{q+n}(y))=\sigma^{m+n}(y)=\sigma^{m}(z)=\sigma^{m-q}(\sigma^q(z)).
\end{equation}
Since $\sigma^{q+n}(y),\sigma^q(z) \in Z(k(W_1 \times\dots\times W_n) \times (W_1 \times\dots\times W_l))$, by \cite[Lemma~7.1]{KumjianLiTwisted2} $\sigma^{q+n}(y)=\sigma^q(z)$. So $(y,n,z) \in \mathcal{U}(Z(W'),Z(W),p,q)$. Hence $\mathcal{U}(Z(W'),Z(W),m+n,m)\subset \mathcal{U}(Z(W')$, $Z(W),p,q)$.   Permuting equation~\eqref{eq1} we see that $\mathcal{U}(Z(W'),Z(W),p,q) \subset \mathcal{U}(Z(W'),Z(W),m+n,m)$ as well. 

We choose an arbitrary $m=q+k_0 n \geq M$ for some $k_0 \geq 1$. We claim $k_0$ and $N:=W_1 \times\dots\times W_n$ satisfy Conditions~(\ref{no path connecting s(alpha),s(beta)}), (\ref{no cycles with entrance of base pt s(alpha)}) of Notation~\ref{define C^n B^n A^n}.

For condition~(\ref{no path connecting s(alpha),s(beta)}), if $\alpha,\beta\in k_0 N$ and $\gamma\in N$ with $s(\alpha\gamma)=s(\beta)$ pick $y\in E^\infty$ with $r(y)=s(\beta)$ and $\nu_\alpha, \nu_\beta\in W_0$ with $s(\nu_\omega)=r(\omega)$. Then
\[(\nu_\alpha\alpha\gamma y,n, \nu_\beta\beta y)\in \mathcal{U}(Z(W'),Z(W),m+n,m)\subset \mathrm{Iso}(\Gamma(E^\infty,\sigma)).
\]  Therefore $\nu_\alpha\alpha\gamma y=\nu_\beta\beta y$ and so $\nu_\alpha=\nu_\beta$ and $\alpha=\beta$.

For condition  (\ref{no cycles with entrance of base pt s(alpha)}), suppose $\alpha\in k_0 N$ and $\gamma\in s(\alpha)N$ is a cycle with an entrance $\delta$.  Write $\gamma=\gamma' \gamma''$ where $r(\delta)=r(\gamma'')$ and $\gamma''(|\delta|)\neq \delta$.  Pick $y\in s(\delta)E^\infty$ and $\nu\in W_0$.  Consider $z=\gamma'\delta y$.  Then
\[
(\nu \alpha \gamma z,n,\nu \alpha  z)\in \mathcal{U}(Z(W'),Z(W),m+n,m)\subset \mathrm{Iso}(\Gamma(E^\infty,\sigma)).
\]  Therefore $\gamma\gamma'\delta=\gamma'\delta$ contradicting that $\delta$ is an entrance.

Conversely, suppose that $x_{q+1}\cdots x_{p} \in B^n$. Then there exist $k \geq 1$ and an open neighborhood $N$ of $x$ satisfying Conditions~(\ref{no path connecting s(alpha),s(beta)}), (\ref{no cycles with entrance of base pt s(alpha)}) of Notation~\ref{define C^n B^n A^n}. Choose an arbitrary open $s^q$-section $O$ containing $x_1 \cdots x_q$. Let $W:=(O \times k N) \cap E^{q+kn}$ and let $W':=(O \times (k+1) N) \cap E^{q+(k+1)n}$. Now
\begin{align*}
(x,n,x) &\in \mathcal{U}(Z(W'),Z(W),p,q) \subset \mathcal{U}(Z(W'),Z(W),p+kn,q+kn)\\
&\subset \mathrm{Iso}(\Gamma(E^\infty,\sigma)).
\end{align*}
So $(x,n,x) \in \mathrm{Iso}(\Gamma(E^\infty,\sigma))^{\mathrm{o}}$.
\end{proof}

\begin{thm}
\label{T:iff}
Let $E$ be a topological graph with $E_{\mathrm{sg}}^0=\mt$. Then $\mathrm{Iso}(\Gamma(E^\infty,\sigma))^{\mathrm{o}}$ is closed in $\Gamma(E^\infty,\sigma)$ if and only if $B^n$ is closed in $E^n$ for all $n \geq 1$.
\end{thm}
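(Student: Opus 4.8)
The plan is to derive both implications directly from the pointwise criterion of Lemma~\ref{criterion of (mu,z,mu) in ISO^o} together with the explicit description of convergent sequences in $\Gamma(E^\infty,\sigma)$ recorded above. Since all spaces here are second countable, $\Gamma(E^\infty,\sigma)$ is first countable, so both the closedness of $\mathrm{Iso}(\Gamma(E^\infty,\sigma))^{\mathrm{o}}$ and the closedness of each $B^n$ in $E^n$ can be tested on sequences. The recurring idea is that a periodic tail of an infinite path converges in $E^\infty$ exactly when the corresponding cycle converges in $E^n$, which lets me translate convergence in the groupoid into convergence of cycles and back.

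For the implication that $B^n$ closed for all $n\ge 1$ forces $\mathrm{Iso}(\Gamma(E^\infty,\sigma))^{\mathrm{o}}$ to be closed, I would take a sequence $(x^t,n_t,x^t)\in\mathrm{Iso}(\Gamma(E^\infty,\sigma))^{\mathrm{o}}$ converging to some $(x,n,y)\in\Gamma(E^\infty,\sigma)$. Because $\mathrm{Iso}(\Gamma(E^\infty,\sigma))$ is closed, $y=x$, and the convergence criterion gives $x^t\to x$ with $n_t=n$ for all large $t$; discarding finitely many terms I may assume $n_t=n$. The case $n=0$ is immediate since $\Gamma(E^\infty,\sigma)^0$ is open in the \'etale groupoid and contained in the isotropy, hence in its interior; and the case $n<0$ reduces to $n>0$ by applying inversion, which is a homeomorphism preserving $\mathrm{Iso}(\Gamma(E^\infty,\sigma))^{\mathrm{o}}$. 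For $n>0$, fix $p,q\ge 0$ minimal with $p-q=n$ and $\sigma^p(x)=\sigma^q(x)$. The convergence criterion then yields $\sigma^p(x^t)=\sigma^q(x^t)$ for all large $t$, so the \emph{same} pair $(p,q)$ is admissible for each $x^t$ in Lemma~\ref{criterion of (mu,z,mu) in ISO^o}. Since $(x^t,n,x^t)\in\mathrm{Iso}(\Gamma(E^\infty,\sigma))^{\mathrm{o}}$, that lemma gives $x^t_{q+1}\cdots x^t_p\in B^n$; as $x^t\to x$ coordinatewise these cycles converge in $E^n$ to $x_{q+1}\cdots x_p$, which lies in $B^n$ by hypothesis. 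Lemma~\ref{criterion of (mu,z,mu) in ISO^o} then returns $(x,n,x)\in\mathrm{Iso}(\Gamma(E^\infty,\sigma))^{\mathrm{o}}$.

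For the converse, suppose $\mathrm{Iso}(\Gamma(E^\infty,\sigma))^{\mathrm{o}}$ is closed and fix $n\ge 1$. Given $c^t\in B^n$ with $c^t\to c$ in $E^n$, I would set $x^t:=c^tc^tc^t\cdots$ and $x:=ccc\cdots$; these are genuine infinite paths because $B^n\subset C^n$ consists of cycles and $C^n$ is closed in $E^n$, so $c$ is again a cycle. Taking $p=n$ and $q=0$, Lemma~\ref{criterion of (mu,z,mu) in ISO^o} shows $(x^t,n,x^t)\in\mathrm{Iso}(\Gamma(E^\infty,\sigma))^{\mathrm{o}}$ since $x^t_1\cdots x^t_n=c^t\in B^n$. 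Coordinatewise convergence $c^t\to c$ gives $x^t\to x$ in $E^\infty$, and because each $x^t$ has period $n$ one checks that $\sigma^k(x^t)=\sigma^\ell(x^t)$ holds automatically for the minimal pair $(k,\ell)$ attached to $(x,n,x)$ (indeed $k-\ell=n$ forces $\sigma^k(x^t)=\sigma^\ell(\sigma^n(x^t))=\sigma^\ell(x^t)$). Hence $(x^t,n,x^t)\to(x,n,x)$, so by closedness $(x,n,x)\in\mathrm{Iso}(\Gamma(E^\infty,\sigma))^{\mathrm{o}}$, and a final application of Lemma~\ref{criterion of (mu,z,mu) in ISO^o} yields $c=x_1\cdots x_n\in B^n$. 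Thus $B^n$ is closed.

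I expect the main obstacle to be the bookkeeping in the first implication: one must arrange a single pair $(p,q)$ --- namely the minimal one for the limit $x$ --- that is simultaneously admissible for all large $x^t$, so that the cycles $x^t_{q+1}\cdots x^t_p$ live in one fixed $E^n$ and converge there. This is precisely what the sequential characterization of convergence in $\Gamma(E^\infty,\sigma)$ supplies, and it is the point where the minimality clause in that characterization is essential. The sign reduction ($n<0$ via inversion, $n=0$ via openness of the unit space) and the verification that the periodic paths $x^t$ converge are comparatively routine.
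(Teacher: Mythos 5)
Your proof is correct and takes essentially the same route as the paper's: both directions rest on Lemma~\ref{criterion of (mu,z,mu) in ISO^o} combined with the sequential description of convergence in $\Gamma(E^\infty,\sigma)$, extracting the cycles $x^t_{q+1}\cdots x^t_p$ from isotropy elements in one direction and building the periodic paths $c^t c^t \cdots$ from cycles in the other. If anything, your write-up is slightly more careful than the paper's, which does not explicitly address the $n=0$ case and takes for granted that a single pair $(p,q)$ is admissible for all large $t$ --- the point you settle via the minimality clause of the convergence criterion.
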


\begin{proof}
Suppose that $\mathrm{Iso}(\Gamma(E^\infty,\sigma))^{\mathrm{o}}$ is closed in $\Gamma(E^\infty,\sigma)$. Fix $n \geq 1$. Fix $(\mu^k)_{k=1}^{\infty} \subset B^n$ which is convergent to $\mu \in E^n$ ($\mu$ is a cycle). Then $(\mu^k \mu^k \cdots,n-0,\mu^k \mu^k \cdots) \to (\mu\mu\cdots,n-0,\mu\mu\cdots)$.  Fix $k \geq 1$. By the definition of $B^n$ in Notation~\ref{define C^n B^n A^n} and by Lemma~\ref{criterion of (mu,z,mu) in ISO^o}, we have $(\mu^k \mu^k \cdots,n-0,\mu^k \mu^k \cdots) \in \mathrm{Iso}(\Gamma(E^\infty,\sigma))^{\mathrm{o}}$. Since $\mathrm{Iso}(\Gamma(E^\infty,\sigma))^{\mathrm{o}}$ is closed, $(\mu\mu\cdots,n-0,\mu\mu\cdots) \in \mathrm{Iso}(\Gamma(E^\infty,\sigma))^{\mathrm{o}}$. Again by the definition of $B^n$ in Notation~\ref{define C^n B^n A^n} and by Lemma~\ref{criterion of (mu,z,mu) in ISO^o}, $\mu \in B^n$. So $B^n$ is closed.

Conversely, suppose that $B^n$ is closed in $E^n$ for all $n \geq 1$. Fix a convergent net $(x^k,n_k,x^k) \subset \mathrm{Iso}(\Gamma(E^\infty,\sigma))^{\mathrm{o}}$ with the limit $(x,n,x)$. We may assume that $n_k=n$ for all $k$ and we take arbitrary $p,q \geq 0$ such that $p-q=n; \sigma^{p}(x^k)=\sigma^{q}(x^k); \sigma^{p}(x)=\sigma^{q}(x)$. We may further assume that $n>0$ since the case $n<0$ shares a symmetric proof. By Lemma~\ref{criterion of (mu,z,mu) in ISO^o}, $x_{q+1}^k \cdots x_{p}^k \in B^n$. Since $x_{q+1}^k \cdots x_{p}^k \to x_{q+1} \cdots x_p$ and $B^n$ is closed, $x_{q+1} \cdots x_p \in B^n$. Again by Lemma~\ref{criterion of (mu,z,mu) in ISO^o}, $(x,n,x) \in \mathrm{Iso}(\Gamma(E^\infty,\sigma))^{\mathrm{o}}$.
\end{proof}

Since $B^n$ is closed for a directed graph,  Theorem~\ref{T:iff} gives Proposition~\ref{P:IsoE} (see Remark \ref{R:dg}). 

\begin{notation}
Let $E$ be a topological graph. For $n \geq 1$, denote by

\[
V_n:=\left\{v \in E^0:
\begin{matrix} \text{there is an open neighborhood of } v \text{ consisting of}\\
\text{base points of cycles without entrances in } E^n
\end{matrix}
\right\}.
\]
\end{notation}

Notice that $V_n$ is an open subset of $E^0$ for all $n \geq 1$.

\begin{thm}\label{the relationship between B^n and V_n}
Let $E$ be a topological graph. Fix $n \geq 1$. Then $r^n(B^n)=V_n,B^n=(r^n)^{-1}(V_n)$. Hence $B^n$ is an open subset of $E^n$. Furthermore, suppose that $E_{\mathrm{sg}}^0=\mt$. Then $B^n$ is closed if and only if $V_n$ is closed. In these cases, $\mathrm{Iso}(\Gamma(E^\infty,\sigma))^{\mathrm{o}}$ is closed in $\Gamma(E^\infty,\sigma)$.
\end{thm}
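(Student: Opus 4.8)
The plan is to prove the two set identities first and extract everything else from them. The conceptual heart, which I would establish at the outset, is a uniqueness property of entrance-free cycles: if $w$ is the base point of a cycle $\lambda\in E^n$ without entrances, then $\lambda$ is the \emph{unique} path in $E^n$ with $r^n$-image $w$. Indeed, "no entrance'' forces $r^{-1}(r(\lambda_i))=\{\lambda_i\}$ at every vertex $r(\lambda_i)$ visited by the cycle, so any $\nu\in E^n$ with $r^n(\nu)=w$ must agree with $\lambda$ edge-by-edge. Consequently, for $v\in V_n$ with witnessing neighborhood $O$, the set $(r^n)^{-1}(O)$ consists precisely of the entrance-free cycles based in $O$, and $r^n$ restricts to a bijection from it onto $O$.

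Granting this, the inclusion $(r^n)^{-1}(V_n)\subseteq B^n$ is short. Given $\mu$ with $v:=r^n(\mu)\in V_n$ and witnessing neighborhood $O$, I would take $k=1$ and $N:=(r^n)^{-1}(O)$. Every element of $N$ is then an entrance-free cycle, so Condition~\eqref{no cycles with entrance of base pt s(alpha)} holds vacuously; and if some $\gamma\in N$ connected $s(\alpha)$ and $s(\beta)$ for $\alpha,\beta\in N$, then $\gamma$, being a cycle, forces $s(\alpha)=s(\beta)$ and hence $\alpha=\beta$ by uniqueness, giving Condition~\eqref{no path connecting s(alpha),s(beta)}. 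Thus $\mu\in B^n$. Once both inclusions are in hand, $r^n(B^n)=V_n$ follows from $V_n\subseteq r^n(C^n)\subseteq r^n(E^n)$, and openness of $B^n$ follows from $B^n=(r^n)^{-1}(V_n)$ together with openness of $V_n$ and continuity of $r^n$.

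The main obstacle is the reverse inclusion $B^n\subseteq(r^n)^{-1}(V_n)$: starting only from the local Conditions~\eqref{no path connecting s(alpha),s(beta)} and~\eqref{no cycles with entrance of base pt s(alpha)} for $\mu\in B^n$, I must manufacture entrance-free cycles at \emph{every} vertex near $v$. Here I would exploit that $s$ is a local homeomorphism while $r$ is merely continuous: shrinking the witnessing $N$ to a product $(W_1\times\cdots\times W_n)\cap E^n$ of composable open $s$-sections, I can, for each $w$ in a neighborhood of $v$, build backwards the unique path $P(w)\in N$ with $s^n(P(w))=w$ by successively applying the inverse $s$-sections and $r$, and likewise a path $Q(w)\in kN$ with $s(Q(w))=w$. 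Setting $R(w):=r^n(P(w))$ gives a continuous map with $R(v)=v$, and $w$ is a base point of a cycle in $N$ exactly when $R(w)=w$. The key claim is that Condition~\eqref{no path connecting s(alpha),s(beta)} forces $R=\mathrm{id}$ near $v$: if $R(w_0)\neq w_0$, then $P(w_0)$ is a path in $N$ connecting $s(Q(R(w_0)))=R(w_0)$ and $s(Q(w_0))=w_0$, while $Q(R(w_0))\neq Q(w_0)$ since the inverse $s$-sections are injective, contradicting~\eqref{no path connecting s(alpha),s(beta)}. Thus every nearby $w$ is a cycle base point, and Condition~\eqref{no cycles with entrance of base pt s(alpha)}, applied to $\alpha=Q(w)$, makes each cycle $P(w)$ entrance-free; hence $v\in V_n$. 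This argument uses neither the factorization $B^n=(r^n)^{-1}(V_n)$ nor the no-singular-vertex hypothesis, as it should.

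For the closedness statement I would argue as follows, now using $E^0_{\mathrm{sg}}=\varnothing$. Since $B^n=(r^n)^{-1}(V_n)$ and $r^n$ is continuous, $V_n$ closed immediately gives $B^n$ closed. Conversely, suppose $B^n$ is closed and let $v_j\to v$ with $v_j\in V_n=r^n(B^n)$, say $v_j=r^n(\mu^j)$ with $\mu^j\in B^n$. Because $E^0_{\mathrm{sg}}=\varnothing$ forces every vertex to be a finite receiver, $r$ is proper near $v$ and near each of the finitely many vertices visited, so a coordinatewise diagonal extraction yields a subsequence with $\mu^j\to\mu\in E^n$; then $\mu\in B^n$ by closedness and $r^n(\mu)=v$, whence $v\in V_n$. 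Finally, applying the equivalence ``$B^n$ closed $\iff V_n$ closed'' for every $n\geq 1$ and invoking Theorem~\ref{T:iff} yields that $\mathrm{Iso}(\Gamma(E^\infty,\sigma))^{\mathrm o}$ is closed, completing the proof.
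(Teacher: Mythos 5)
Your proposal is correct and follows essentially the same route as the paper's proof: your uniqueness property of entrance-free cycles together with the $k=1$, $N=(r^n)^{-1}(O)$ witness is exactly the paper's (terser) argument that $V_n\subseteq r^n(B^n)$ and $(r^n)^{-1}(V_n)\subseteq B^n$, and your return-map argument --- applying Condition~(1) of Notation~\ref{define C^n B^n A^n} to the pair $(Q(R(w)),Q(w))$ joined by $P(w)$ --- is precisely the paper's application of that condition to the blocks $\alpha^{(1)}\cdots\alpha^{(k)}$ and $\alpha^{(2)}\cdots\alpha^{(k+1)}$ of a path in $(k+1)N$, whose existence the paper obtains from openness of the source map $s^{(k+1)n}$ rather than from your explicit backwards construction of local inverses. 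The only other difference is cosmetic: for ``$B^n$ closed $\Rightarrow V_n$ closed'' the paper simply notes that $E_{\mathrm{sg}}^0=\varnothing$ makes $r^n$ proper, hence closed, whereas you rerun the underlying compactness argument by hand with sequences.
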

\begin{proof}
First of all, we show that $r^n(B^n)=V_n$. Fix $\mu \in B^n$. Then there exist $k \geq 1$ and an open neighborhood $N$ of $\mu$ satisfying Conditions~(\ref{no path connecting s(alpha),s(beta)}), (\ref{no cycles with entrance of base pt s(alpha)}) of Notation~\ref{define C^n B^n A^n}. Then $W:=s^{(k+1)n}((k+1)N \cap E^{(k+1)n})$ is an open neighborhood of $r^n(\mu)$. For $v\in W$, there exists $\alpha=\alpha^{(1)}\cdots\alpha^{(k)}\alpha^{(k+1)} \in (k+1)N \cap E^{(k+1)n}$, where $\alpha^{(1)},\dots, \alpha^{(k)},\alpha^{(k+1)} \in N$, such that $s^{(k+1)n}(\alpha)=v$. So $\alpha^{(k+1)}$ connects $s^{kn}(\alpha^{(1)}\cdots\alpha^{(k)})=s^{kn}(\alpha^{(2)}\cdots\alpha^{(k+1)})$. By Condition~(\ref{no path connecting s(alpha),s(beta)}) of Notation~\ref{define C^n B^n A^n}, $\alpha^{(1)}\cdots\alpha^{(k)}=\alpha^{(2)}\cdots\alpha^{(k+1)}$. So $\alpha^{(1)}=\dots=\alpha^{(k)}=\alpha^{(k+1)}$ is a cycle in $E^n$. By Condition~(\ref{no cycles with entrance of base pt s(alpha)}) of Notation~\ref{define C^n B^n A^n}, $\alpha^{(1)}=\dots=\alpha^{(k)}=\alpha^{(k+1)}$ has no entrances. So $r^n(\mu) \in V_n$ and $r^n(B^n) \subset V_n$. Conversely, fix $v \in V_n$. Then there exists an open neighborhood $W$ of $v$ consisting of base points of cycles without entrances in $E^n$. So $(r^n)^{-1}(W)$ is an open neighborhood of $(r^n)^{-1}(v)$ consisting of cycles without entrances. It is straightforward to see that $(r^n)^{-1}(v) \in B^n$. So $r^n(B^n)=V_n$. It follows immediately that $B^n=(r^n)^{-1}(V_n)$. Hence $B^n$ is an open subset of $E^n$ because $V_n$ is open.

Finally, suppose  $E_{\mathrm{sg}}^0=\mt$ and $B^n$ is closed. Since $E_{\mathrm{sg}}^0=\mt, r^n$ is proper, and so $r^n$ is closed. Since $r^n(B^n)=V_n, V_n$ is closed. Conversely, suppose that $V_n$ is closed. Since $B^n=(r^n)^{-1}(V_n), B^n$ is closed. \end{proof}

For the next corollary we need a definition.

\begin{defn}[{\cite[Definition~5.4]{Katsura:TAMS04}}] A topological graph E is topologically free if the set of
base points of loops without entrances has empty interior.
\end{defn}

\begin{cor}\label{top free iff B^n=emptyset}
Let $E$ be a topological graph such that $E_{\mathrm{sg}}^0=\mt$. Then the following are equivalent.
\begin{enumerate}
\item\label{E top free} $E$ is topologically free;
\item\label{all B^n=emptyset} $B^n=\mt$ for all $n \geq 1$;
\item\label{all V_n=emptyset} $V_n=\mt$ for all $n \geq 1$;
\item\label{ess free groupoid} $\Gamma(E^\infty,\sigma)$ is essentially free.
\end{enumerate}
In these cases, $\mathrm{Iso}(\Gamma(E^\infty,\sigma))^{\mathrm{o}}$ is closed in $\Gamma(E^\infty,\sigma)$.
\end{cor}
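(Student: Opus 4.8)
The plan is to prove the cycle $(\ref{E top free})\Rightarrow(\ref{all V_n=emptyset})\Rightarrow(\ref{ess free groupoid})\Rightarrow(\ref{E top free})$ together with the equivalence $(\ref{all B^n=emptyset})\Leftrightarrow(\ref{all V_n=emptyset})$, and then read off the last sentence. Write $P_n$ for the set of base points of cycles without entrances in $E^n$ and $P:=\bigcup_{n\ge1}P_n$; the definition of $V_n$ says exactly that $V_n=\mathrm{int}(P_n)$, and topological freeness of $E$ is precisely the statement $\mathrm{int}(P)=\mt$. With these identifications two of the implications are immediate. First, $(\ref{all B^n=emptyset})\Leftrightarrow(\ref{all V_n=emptyset})$ follows at once from Theorem~\ref{the relationship between B^n and V_n}, since $r^n(B^n)=V_n$ and $B^n=(r^n)^{-1}(V_n)$ force $B^n=\mt$ iff $V_n=\mt$ for each $n$. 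Second, $(\ref{E top free})\Rightarrow(\ref{all V_n=emptyset})$ is just $V_n=\mathrm{int}(P_n)\subseteq\mathrm{int}(P)=\mt$.

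For $(\ref{ess free groupoid})\Rightarrow(\ref{E top free})$ I would argue the contrapositive, using that a cycle without entrances is forward–deterministic. If $E$ is not topologically free there is a nonempty open $O\subseteq P$ in $E^0$. For $v\in O$ the vertex $v$ is the base point of a cycle $c$ without entrances, and ``no entrance'' means exactly that every vertex $w$ on $c$ satisfies $r^{-1}(w)=\{\text{the cycle edge at }w\}$; hence the only infinite path $x$ with $r(x_1)=v$ is $c^\infty$. Therefore the open set $R:=\{x\in E^\infty:r(x_1)\in O\}$ is nonempty and consists entirely of periodic paths, each of which has nontrivial isotropy. So the units with trivial isotropy are not dense, and $\Gamma(E^\infty,\sigma)$ is not essentially free.

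The substantive step, and the one I expect to be the main obstacle, is $(\ref{all V_n=emptyset})\Rightarrow(\ref{ess free groupoid})$, again by contraposition. Suppose $\Gamma(E^\infty,\sigma)$ is not essentially free; the set of units with nontrivial isotropy is $\bigcup_{n\ge1,\,q\ge0}D_{n,q}$, where $D_{n,q}:=\{x\in E^\infty:\sigma^{q+n}(x)=\sigma^q(x)\}$, and it has nonempty interior. Each $D_{n,q}$ is \emph{closed}, being cut out by the coordinate equalities $x_{q+i}=x_{q+n+i}$, and $E^\infty$ is a second countable locally compact Hausdorff space, hence a Baire space; so some $D_{n,q}$ itself has nonempty interior. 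Picking a basic open $Z(U)\subseteq D_{n,q}$ with $U\subseteq E^m$, $m\ge q+n$, every $x\in Z(U)$ then has periodic tail $(x_{q+1}\cdots x_{q+n})^\infty$. The key claim to establish is that the cycle $x_{q+1}\cdots x_{q+n}$ has no entrance: if it had an entrance $e$ at a cycle vertex, I would splice $e$ into $x$ at a position beyond $m$ (and continue afterwards, which is possible because $E_{\mathrm{sg}}^0=\mt$), obtaining a path still in $Z(U)$ yet violating periodicity, contradicting $Z(U)\subseteq D_{n,q}$. Projecting onto the last cycle coordinate and applying the local homeomorphism $s$ (an open map) then exhibits $s(\{\mu_{q+n}:\mu\in U\})$ as a nonempty open subset of $P_n$, whence $V_n=\mathrm{int}(P_n)\ne\mt$.

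Assembling $(\ref{E top free})\Rightarrow(\ref{all V_n=emptyset})\Rightarrow(\ref{ess free groupoid})\Rightarrow(\ref{E top free})$ with $(\ref{all B^n=emptyset})\Leftrightarrow(\ref{all V_n=emptyset})$ gives the fourfold equivalence. For the closing assertion, once the conditions hold we have $B^n=\mt$ for all $n$ by $(\ref{all B^n=emptyset})$, which is trivially closed in $E^n$, so Theorem~\ref{T:iff} yields that $\mathrm{Iso}(\Gamma(E^\infty,\sigma))^{\mathrm o}$ is closed. The delicate point throughout is the entrance-extraction in $(\ref{all V_n=emptyset})\Rightarrow(\ref{ess free groupoid})$: one must arrange the spliced-in entrance to occur strictly past the length $m$ of the defining cylinder, so as to remain inside $Z(U)$ while destroying periodicity, and it is exactly here that the hypothesis $E_{\mathrm{sg}}^0=\mt$ (allowing the path to be prolonged after the entrance) is needed.
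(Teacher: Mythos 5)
Your route is genuinely different from the paper's and most of it is correct, but there is one real gap: the openness claim at the end of your proof of (\ref{all V_n=emptyset})$\Rightarrow$(\ref{ess free groupoid}). You assert that $s(\{\mu_{q+n}:\mu\in U\})$ is open because $s$ is an open map. What you actually apply to $U$ is the composite of the coordinate projection $E^m\to E^1$, $\mu\mapsto \mu_{q+n}$, with $s$, and projections of the fibred space $E^m$ onto interior coordinates are \emph{not} open maps: composability ties $\mu_{q+n}$ to the later coordinates through $r$, which is only continuous. Nor can you force $m=q+n$, since the cylinder $Z(U)\subseteq D_{n,q}$ produced by the Baire argument may genuinely require $m>q+n$. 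A concrete failure with $E_{\mathrm{sg}}^0=\mt$: let $E^0=[0,1]$ and $E^1=[0,1]\sqcup[0,1]$, with $s$ the identity on each copy, $r$ the identity on the first copy and $r\equiv 0$ on the second copy $V$. Then $(V\times V)\cap E^2=\{0_V\}\times V$ is open in $E^2$, its first-coordinate projection is $\{0_V\}$, and applying $s$ gives $\{0\}$, which is not open in $E^0$. So the set you exhibit need not be open and does not by itself witness $V_n\neq\mt$. The repair is short: use the \emph{last} coordinate instead, i.e.\ the source map $s^m$ of the topological graph $E^m$, which is a local homeomorphism and hence open (this is also the map the paper uses, e.g.\ in the proof of Theorem~\ref{the relationship between B^n and V_n}). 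Then $s^m(U)$ is a nonempty open subset of $E^0$ contained in $P_n$: every $\mu\in U$ extends to some $x\in Z(U)$ (possible since $E_{\mathrm{sg}}^0=\mt$), your splicing argument shows $x_{q+1}\cdots x_{q+n}$ is a cycle without entrances, $s(\mu_m)$ is a vertex on that cycle because the tail of $x$ is $n$-periodic past $q$ and $m\geq q+n$, and every vertex on an entrance-free cycle is the base point of an entrance-free cycle (rotate it; an entrance of the rotation would be an entrance of the original). With this one change your cycle of implications is correct.

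Everything else checks out, and the comparison with the paper is instructive. Your identifications $V_n=\mathrm{int}(P_n)$ and ``topologically free $\Leftrightarrow\mathrm{int}(P)=\mt$'' make (\ref{E top free})$\Rightarrow$(\ref{all V_n=emptyset}) immediate; (\ref{all B^n=emptyset})$\Leftrightarrow$(\ref{all V_n=emptyset}) via Theorem~\ref{the relationship between B^n and V_n} is exactly the paper's step; your (\ref{ess free groupoid})$\Rightarrow$(\ref{E top free}) argument is sound (entrance-freeness forces $r^{-1}$ of each cycle vertex to be a single edge, so $c^\infty$ is the unique infinite path based at $v$, giving a nonempty open set of units with nontrivial isotropy); the sets $D_{n,q}$ are indeed closed and the Baire argument is valid since $E^\infty$ is locally compact Hausdorff; and the closing sentence follows from Theorem~\ref{T:iff} because $\mt$ is closed. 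The paper, by contrast, does not argue directly: it gets the substantive direction of (\ref{E top free})$\Leftrightarrow$(\ref{all B^n=emptyset}) by citing Proposition~6.12 of \cite{Katsura:ETDS06} --- whose content (localizing the failure of topological freeness to a single $n$ and an open set of base points) is precisely what your Baire-plus-splicing argument reproves --- and it gets (\ref{E top free})$\Leftrightarrow$(\ref{ess free groupoid}) by citing Proposition~3.7 of \cite{Li15}. So your proof, once repaired, is self-contained at the groupoid level where the paper's is citation-based, and the price of that self-containedness is exactly the delicate openness point above.
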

\begin{proof}
(\ref{E top free})$\Leftrightarrow$(\ref{all B^n=emptyset}). Firstly, suppose that $E$ is topologically free. Suppose that there exists $n \geq 1$ such that $B^n \neq \mt$. Since $B^n=(r^n)^{-1}(V_n)$ by Theorem~\ref{the relationship between B^n and V_n}, $V_n \neq \mt$ which is a contradiction. So $B^n=\mt$ for all $n \geq 1$. Conversely, suppose that $B^n=\mt$ for all $n \geq 1$. 
To the contrary, assume that $E$ is not topologically free. By Theorem~\ref{the relationship between B^n and V_n}, $V_n=\mt$ for all $n \geq 1$. Since $E$ is not topologically free, by \cite[Proposition~6.12]{Katsura:ETDS06}, there exist a nonempty open subset $V \subset E^0$ and $n\in \mathbb{N}_+$ such that $V$ consists of base points of cycles in $E^n$ without entrances, which is a contradiction. So $E$ is topologically free.

(\ref{all B^n=emptyset})$\Leftrightarrow$(\ref{all V_n=emptyset}) follows from Theorem~\ref{the relationship between B^n and V_n}.

(\ref{E top free})$\Leftrightarrow$(\ref{ess free groupoid}) follows from \cite[Proposition~3.7]{Li15}.
\end{proof}

\begin{cor}
Let $E$ be a topological graph such that $E_{\mathrm{sg}}^0=\mt$ and $\mathcal{O}(E)$ is simple. Then $\mathrm{Iso}(\Gamma(E^\infty,\sigma))^{\mathrm{o}}$ is closed in $\Gamma(E^\infty,\sigma)$.
\end{cor}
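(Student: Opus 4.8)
The plan is to reduce the statement to the topological-freeness dichotomy already established in Corollary~\ref{top free iff B^n=emptyset}. That corollary shows that whenever $E$ is topologically free---equivalently, $B^n=\mt$ for all $n\geq 1$, or $\Gamma(E^\infty,\sigma)$ is essentially free---the interior of the isotropy group bundle $\mathrm{Iso}(\Gamma(E^\infty,\sigma))^{\mathrm{o}}$ is automatically closed in $\Gamma(E^\infty,\sigma)$. Thus the entire content of the corollary is to verify that simplicity of $\O(E)$ forces $E$ to be topologically free; once that implication is in hand the conclusion is immediate, either by the last line of Corollary~\ref{top free iff B^n=emptyset} or, more concretely, by noting that topological freeness gives $B^n=\mt$ (hence trivially closed) for every $n$ and then invoking Theorem~\ref{T:iff}.

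To obtain topological freeness from simplicity, I would appeal to Katsura's analysis of the ideal structure of topological graph algebras. Katsura characterizes simplicity of $\O(E)$ by the two conditions that $E$ be minimal and topologically free \cite{Katsura:ETDS06}; in particular, topological freeness is a \emph{necessary} condition for simplicity, and that is the only direction we need. This is the expected analogue of the directed-graph situation, where simplicity of the graph algebra requires Condition~(L) (every cycle has an entrance) together with cofinality, and of the crossed-product picture, where simplicity forces the underlying dynamics to be topologically free. Having deduced that $E$ is topologically free, I would close the proof by citing the implication $(\ref{E top free})\Rightarrow$ closedness inside Corollary~\ref{top free iff B^n=emptyset}.

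The only genuine obstacle is to pin down and correctly invoke the exact form of Katsura's simplicity theorem in \cite{Katsura:ETDS06}, and to check that its hypotheses are compatible with our standing assumption $E_{\mathrm{sg}}^0=\mt$, which is what makes the boundary path groupoid and all of the preceding results available. If one preferred to avoid the black-box citation, the alternative is the contrapositive: failure of topological freeness produces, via \cite{Katsura:ETDS06}, a nonempty open set of base points of entrance-free cycles in some $E^n$, and this open invariant set cuts out a proper nonzero ideal of $\O(E)$, contradicting simplicity. Since this argument merely reproves one direction of Katsura's theorem, citing the theorem directly is the cleaner route.
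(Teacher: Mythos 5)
Your proposal is correct and follows essentially the same route as the paper: the paper's proof likewise cites Katsura's simplicity theorem (\cite[Theorem~8.12]{Katsura:ETDS06}, which says $\mathcal{O}(E)$ is simple if and only if $E$ is topologically free and minimal) to extract topological freeness, and then applies Corollary~\ref{top free iff B^n=emptyset} to conclude that $\mathrm{Iso}(\Gamma(E^\infty,\sigma))^{\mathrm{o}}$ is closed. Your extra remark about verifying compatibility with $E_{\mathrm{sg}}^0=\mt$ and the contrapositive alternative are sound but not needed beyond what the two citations already provide.
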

\begin{proof}
This follows  from Corollary~\ref{top free iff B^n=emptyset} and \cite[Theorem 8.12]{Katsura:ETDS06}.
\end{proof}

\section{Cartan Subalgebras of $k$-graph Algebras}
\label{kG}

In this section, we characterize of when the interior of the isotropy of a path groupoid of a row-finite $k$-graph without sources $\Lambda$ is closed.

\begin{defn}[{\cite[Definition~4.3]{MR3150172}}]
Let $k \in \mathbb{N}_+$, let $\Lambda$ be a row-finite $k$-graph without sources. Then a pair $(\mu,\nu) \in \Lambda\times\Lambda$ is called a \emph{cycline pair} if $s(\mu)=s(\nu)$ and $\mu x=\nu x$ for all $x \in s(\mu)\Lambda^\infty$.
\end{defn}

The following lemma is stated without proof in \cite[Remark~4.11]{MR3150172}.

\begin{lem}\label{Iso^o and cycline pair}
Let $k \geq 1$ and let $\Lambda$ be a row-finite $k$-graph without sources. Then
\[
\mathrm{Iso}(\mathcal{G}_\Lambda)^{\mathrm{o}}=\big\{(x,p-q,x) \in \mathcal{G}_\Lambda:\sigma^p(x)=\sigma^q(x),(x(p),x(q)) \text{ is a cycline pair}\big\}.
\]
\end{lem}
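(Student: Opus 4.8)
The plan is to exploit the fact that the compact open bisections $Z(\mu,\nu)$ form a basis for the topology on $\mathcal{G}_\Lambda$. Since $\mathrm{Iso}(\mathcal{G}_\Lambda)^{\mathrm{o}}$ is by definition the largest open subset of $\mathcal{G}_\Lambda$ contained in $\mathrm{Iso}(\mathcal{G}_\Lambda)$, a point lies in $\mathrm{Iso}(\mathcal{G}_\Lambda)^{\mathrm{o}}$ if and only if it has a basic open neighborhood of the form $Z(\mu,\nu)$ entirely contained in $\mathrm{Iso}(\mathcal{G}_\Lambda)$. Thus the whole problem reduces to deciding exactly when such a basic bisection lies inside the isotropy bundle.

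The central observation I would isolate first is the following: for $\mu,\nu\in\Lambda$ with $s(\mu)=s(\nu)$, one has $Z(\mu,\nu)\subseteq\mathrm{Iso}(\mathcal{G}_\Lambda)$ if and only if $(\mu,\nu)$ is a cycline pair. Indeed, a typical element of $Z(\mu,\nu)$ is $(\mu y,d(\mu)-d(\nu),\nu y)$ with $y\in s(\mu)\Lambda^\infty$, and this element lies in $\mathrm{Iso}(\mathcal{G}_\Lambda)$ (that is, its range equals its source) precisely when $\mu y=\nu y$. Letting $y$ range over all of $s(\mu)\Lambda^\infty$, the containment $Z(\mu,\nu)\subseteq\mathrm{Iso}(\mathcal{G}_\Lambda)$ is therefore equivalent to the statement that $\mu y=\nu y$ for all $y\in s(\mu)\Lambda^\infty$, which is exactly the definition of a cycline pair.

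With these two facts in hand the two inclusions are quick. For the inclusion $\supseteq$, given $(x,p-q,x)$ with $\sigma^p(x)=\sigma^q(x)$ and $(x(p),x(q))$ cycline, I set $\mu:=x(p)$ and $\nu:=x(q)$; here $\sigma^p(x)=\sigma^q(x)$ forces $s(\mu)=r(\sigma^p(x))=r(\sigma^q(x))=s(\nu)$, so $Z(\mu,\nu)$ is defined, and writing $x=\mu\sigma^p(x)=\nu\sigma^q(x)$ shows $(x,p-q,x)\in Z(\mu,\nu)$. Since $(\mu,\nu)$ is cycline, the central observation gives $Z(\mu,\nu)\subseteq\mathrm{Iso}(\mathcal{G}_\Lambda)$, whence $(x,p-q,x)\in\mathrm{Iso}(\mathcal{G}_\Lambda)^{\mathrm{o}}$. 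For the reverse inclusion $\subseteq$, I take a point of $\mathrm{Iso}(\mathcal{G}_\Lambda)^{\mathrm{o}}$, choose by the reduction a basic neighborhood $Z(\mu,\nu)$ containing it and contained in $\mathrm{Iso}(\mathcal{G}_\Lambda)$, and read off from $(x,m,x)\in Z(\mu,\nu)$ that $x(d(\mu))=\mu$, $x(d(\nu))=\nu$, $\sigma^{d(\mu)}(x)=\sigma^{d(\nu)}(x)$, and $m=d(\mu)-d(\nu)$; the central observation says $(\mu,\nu)=(x(d(\mu)),x(d(\nu)))$ is cycline, so with $p:=d(\mu)$ and $q:=d(\nu)$ the point has the required form.

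I do not expect a serious obstacle here; the argument is essentially a translation between the basis of the topology and the cycline condition. The only points requiring a little care are bookkeeping ones: verifying that membership $(x,m,x)\in Z(\mu,\nu)$ genuinely pins down the initial segments $x(d(\mu))=\mu$ and $x(d(\nu))=\nu$ together with $\sigma^{d(\mu)}(x)=\sigma^{d(\nu)}(x)$, and being careful that the conditions $\sigma^p(x)=\sigma^q(x)$ and ``$(x(p),x(q))$ cycline'' in the right-hand set are understood existentially in $(p,q)$, so that the particular pair $(p,q)$ produced in each direction suffices.
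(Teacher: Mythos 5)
Your proposal is correct and follows essentially the same route as the paper's own proof: both directions reduce to the observation that a basic bisection $Z(\mu,\nu)$ lies in $\mathrm{Iso}(\mathcal{G}_\Lambda)$ exactly when $(\mu,\nu)$ is a cycline pair, with $Z(x(p),x(q))$ serving as the required neighborhood in the reverse inclusion. Your write-up merely makes explicit some bookkeeping (e.g.\ that $(x,m,x)\in Z(\mu,\nu)$ forces $x(d(\mu))=\mu$, $x(d(\nu))=\nu$, and $\sigma^{d(\mu)}(x)=\sigma^{d(\nu)}(x)$) that the paper leaves implicit.
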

\begin{proof}
First of all, fix $(x,n,x) \in \mathrm{Iso}(\mathcal{G}_\Lambda)^{\mathrm{o}}$. Then there exist $\mu,\nu \in \Lambda$ such that $(x,n,x) \in Z(\mu,\nu) \subset \mathrm{Iso}(\mathcal{G}_\Lambda)$. So $\sigma^{d(\mu)}(x)=\sigma^{d(\nu)}(x),(x,n,x)=(\mu \sigma^{d(\mu)}(x),d(\mu)-d(\nu),\nu \sigma^{d(\nu)}(x))$, and $(\mu,\nu)$ is a cycline pair.

Conversely, fix $(x,p-q,x) \in \mathcal{G}_\Lambda$ such that $\sigma^p(x)=\sigma^q(x)$ and $(x(p),x(q))$ is a cycline pair. Then $(x,p-q,x) \in Z(x(p),x(q)) \subset \mathrm{Iso}(\mathcal{G}_\Lambda)$. So $(x,p-q,x) \in \mathrm{Iso}(\mathcal{G}_\Lambda)^{\mathrm{o}}$.
\end{proof}

\begin{notation}\label{define Lambda_p,q p neq q}
Let $k \geq 1$ and let $\Lambda$ be a row-finite $k$-graph without sources.  For $p \neq q \in \mathbb{N}^{k}$, denote by $\Lambda^{\infty}_{p,q}$ the set consisting of $x \in \Lambda^\infty$ satisfying the following properties:
\begin{enumerate}
\item\label{sigma^p(x)=sigma^q(x)} $\sigma^{p}(x)=\sigma^{q}(x)$;
\item\label{per and ape exist at the same time} for any $p', q' \in \mathbb{N}^k$ with $p'-p=q'-q\in \bN^k$, then $(x(p'),x(q'))$ is not a cycline pair, and   $(x(p')\mu,x(q')\mu)$ is a cycline pair for some $\mu \in \Lambda$.
\end{enumerate}
\end{notation}

\begin{thm}
\label{P:k-graphiff}
Let $k \geq 1$ and $\Lambda$ be a row-finite $k$-graph without sources. Then $\mathrm{Iso}(\mathcal{G}_\Lambda)^{\mathrm{o}}$ is closed if and only if $\Lambda^{\infty}_{p,q}=\mt$ for all $p \neq q \in \mathbb{N}^{k}$.
\end{thm}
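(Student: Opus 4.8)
The plan is to reduce everything to the description of the interior of the isotropy given in Lemma~\ref{Iso^o and cycline pair}, and then to argue both implications by contraposition. Two facts drive the argument. First, cyclineness is stable under common right extensions: if $(\alpha,\beta)$ is a cycline pair with $s(\alpha)=s(\beta)$, then $(\alpha\rho,\beta\rho)$ is again cycline for any $\rho$ with $r(\rho)=s(\alpha)$, since $\alpha(\rho z)=\beta(\rho z)$ for all $z$. Second, there is the explicit criterion for convergence in $\mathcal{G}_\Lambda$ recalled just before the theorem. Stability makes the set of decompositions $(p+t,q+t)$ realizing the isotropy of a fixed point an up-set in $t$, which is precisely the shape of Condition~(\ref{per and ape exist at the same time}) of Notation~\ref{define Lambda_p,q p neq q}. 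Throughout I write $n:=p-q$.

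For ``$\Lambda^\infty_{p,q}\neq\mt$ for some $p\neq q$ $\Rightarrow$ $\mathrm{Iso}(\mathcal{G}_\Lambda)^{\mathrm{o}}$ not closed'', fix $x\in\Lambda^\infty_{p,q}$, so $(x,n,x)\in\mathrm{Iso}(\mathcal{G}_\Lambda)$ by Condition~(\ref{sigma^p(x)=sigma^q(x)}). I first check $(x,n,x)\notin\mathrm{Iso}(\mathcal{G}_\Lambda)^{\mathrm{o}}$: were it interior, Lemma~\ref{Iso^o and cycline pair} would give a decomposition with $(x(p'),x(q'))$ cycline, and extending it by a common segment up to $(p+t,q+t)$ for suitable $t\geq 0$ (by stability) would contradict the ``not cycline'' half of Condition~(\ref{per and ape exist at the same time}). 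Next I build an approximating sequence in the interior. Choosing $t_j\to\infty$ coordinatewise, Condition~(\ref{per and ape exist at the same time}) supplies $\mu_j$ with $(x(p+t_j)\mu_j,x(q+t_j)\mu_j)$ cycline; picking $z_j\in s(\mu_j)\Lambda^\infty$ (possible as $\Lambda$ has no sources) and setting $x^j:=x(p+t_j)\mu_j z_j=x(q+t_j)\mu_j z_j$, Lemma~\ref{Iso^o and cycline pair} gives $(x^j,n,x^j)\in\mathrm{Iso}(\mathcal{G}_\Lambda)^{\mathrm{o}}$. To see $(x^j,n,x^j)\to(x,n,x)$ I apply the convergence criterion: for a valid decomposition $(p',q')$ of $x$ and $j$ large enough that $t_j\geq p'\vee q'$, the equalities $x^j(p')=x(p')$ and $x^j(q')=x(q')$ are immediate, and writing $D:=(p+t_j)-p'=(q+t_j)-q'$ one gets $\sigma^{p'}(x^j)=\sigma^{p'}(x)(0,D)\,\mu_j z_j=\sigma^{q'}(x)(0,D)\,\mu_j z_j=\sigma^{q'}(x^j)$, the middle segments coinciding because $\sigma^{p'}(x)=\sigma^{q'}(x)$.

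For the converse I assume $\mathrm{Iso}(\mathcal{G}_\Lambda)^{\mathrm{o}}$ is not closed. Since $\mathrm{Iso}(\mathcal{G}_\Lambda)$ is closed and the cocycle into $\mathbb{Z}^k$ is locally constant, there is a sequence $(x^j,n,x^j)\in\mathrm{Iso}(\mathcal{G}_\Lambda)^{\mathrm{o}}$ converging to some $(x,n,x)\in\mathrm{Iso}(\mathcal{G}_\Lambda)\setminus\mathrm{Iso}(\mathcal{G}_\Lambda)^{\mathrm{o}}$ (necessarily $n\neq 0$, as units are interior). Fix any decomposition $(p,q)$ with $p-q=n$ and $\sigma^p(x)=\sigma^q(x)$, so $p\neq q$; I claim $x\in\Lambda^\infty_{p,q}$. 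Condition~(\ref{sigma^p(x)=sigma^q(x)}) holds by choice. For Condition~(\ref{per and ape exist at the same time}) and any $t\geq 0$, the ``not cycline'' assertion for $(x(p+t),x(q+t))$ is forced by $(x,n,x)\notin\mathrm{Iso}(\mathcal{G}_\Lambda)^{\mathrm{o}}$ through the contrapositive of Lemma~\ref{Iso^o and cycline pair}. To produce the extension $\mu$, apply the convergence criterion to $(p+t,q+t)$: for $j$ large, $x^j(p+t)=x(p+t)$, $x^j(q+t)=x(q+t)$ and $\sigma^{p+t}(x^j)=\sigma^{q+t}(x^j)$. As $(x^j,n,x^j)$ is interior, Lemma~\ref{Iso^o and cycline pair} gives a cycline pair $(x^j(a_j),x^j(b_j))$; enlarging $(a_j,b_j)$ by stability I may assume $a_j\geq p+t$ and $b_j\geq q+t$, whence $x^j(a_j)=x(p+t)\mu$ and $x^j(b_j)=x(q+t)\mu$ with the common $\mu=\sigma^{p+t}(x^j)(0,a_j-(p+t))$, using $\sigma^{p+t}(x^j)=\sigma^{q+t}(x^j)$. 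Thus $(x(p+t)\mu,x(q+t)\mu)$ is cycline, finishing Condition~(\ref{per and ape exist at the same time}).

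The step I expect to be the main obstacle is the verification of groupoid convergence in the first implication: the criterion must hold for \emph{every} valid decomposition $(p',q')$ of $x$, and one needs equality of the full infinite tails $\sigma^{p'}(x^j)=\sigma^{q'}(x^j)$, not merely of long prefixes. This is resolved by splitting the two shifts of $x^j$ at the distinct points $p+t_j$ and $q+t_j$ and invoking the degree identity $(p+t_j)-p'=(q+t_j)-q'$, which forces the two truncated segments to be literally equal. The parallel point in the converse---that the cycline decomposition of $x^j$ can be pushed up past $(p+t,q+t)$ so that its two legs share one common suffix $\mu$---is again handled by the stability of cyclineness under common right extensions.
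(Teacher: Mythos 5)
Your proof is correct and follows essentially the same route as the paper: both directions rest on Lemma~\ref{Iso^o and cycline pair}, the convergence criterion in $\mathcal{G}_\Lambda$, the interior approximating sequence $x(p+t_j)\mu_j z_j$, and the stability of cycline pairs under common right extensions (the paper's join trick $p_N\lor p'$, $q_N\lor q'$ is your ``enlargement'' step in different bookkeeping). The only real difference is a streamlining: where the paper manufactures auxiliary non-isotropy sequences (the $(x(p_n)y_n,p-q,x(q_n)y_n)$ and $(z_n,p-q,w_n)$) to witness failure of cyclineness, you invoke the contrapositive of Lemma~\ref{Iso^o and cycline pair} directly, which shortens both implications without changing their substance.
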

\begin{proof}
First of all, suppose that $\mathrm{Iso}(\mathcal{G}_\Lambda)^{\mathrm{o}}$ is closed. Suppose that there exist $p \neq q \in \mathbb{N}^{k}$ such that $\Lambda^{\infty}_{p,q}\neq\mt$, for a contradiction. Fix $x \in \Lambda^{\infty}_{p,q}$. For $n \geq 1$, let $p_n:=p+(n,\dots,n)$ and $q_n:=q+(n,\dots,n)$. By Condition~(\ref{per and ape exist at the same time}) of Notation~\ref{define Lambda_p,q p neq q}, for $n \geq 1$ there exist $y_n \in s(x(p_n))\Lambda^\infty$ and $\mu_n \in s(x(p_n))\Lambda$ such that $x(p_n)y_n \neq x(q_n)y_n$ and $(x(p_n)\mu_n,x(q_n)\mu_n)$ is a cycline pair. For $n \geq 1$, take an arbitrary $z_n \in s(\mu_n)\Lambda^\infty$. Then $(x(p_n)y_n,p-q,x(q_n)y_n) \to (x,p-q,x)$, and $(x(p_n)\mu_n z_n,p-q,x(q_n)\mu_n z_n) \to (x,p-q,x)$. However, $(x(p_n)y_n,p-q,x(q_n)y_n) \notin \mathrm{Iso}(\mathcal{G}_\Lambda)$; and by Lemma~\ref{Iso^o and cycline pair} one has
$(x(p_n)\mu_n z_n,p-q,x(q_n)\mu_n z_n) \in \mathrm{Iso}(\mathcal{G}_\Lambda)^{\mathrm{o}}$. This is a contradiction. So $\Lambda^{\infty}_{p,q}=\mt$ for all $p \neq q \in \mathbb{N}^{k}$.

Conversely, suppose that $\Lambda^{\infty}_{p,q}=\mt$ for all $p \neq q \in \mathbb{N}^{k}$. To the contrary, let us assume that $\mathrm{Iso}(\mathcal{G}_\Lambda)^{\mathrm{o}}$ is not closed. 
Then $\overline{\mathrm{Iso}(\mathcal{G}_\Lambda)^{\mathrm{o}}} \setminus \mathrm{Iso}(\mathcal{G}_\Lambda)^{\mathrm{o}} \neq \mt$. Fix $(x,p-q,x) \in \overline{\mathrm{Iso}(\mathcal{G}_\Lambda)^{\mathrm{o}}} \setminus \mathrm{Iso}(\mathcal{G}_\Lambda)^{\mathrm{o}}$. We may assume that $p \neq q \in \mathbb{N}^{k}$ and $\sigma^p(x)=\sigma^q(x)$. Then there exist
 a sequence $(y_n,p-q,y_n)$ in $\mathrm{Iso}(\mathcal{G}_\Lambda)^{\mathrm{o}}$ converging to  $(x,p-q,x)$, and another sequence $(z_n,p-q,w_n)$ with $z_n\ne w_n$ for all $n\ge 1$
 also converging to $(x,p-q,x)$.
By Lemma~\ref{Iso^o and cycline pair} for $n \geq 1$ there exist $p_n, q_n \in \mathbb{N}^k$ with $p_n-q_n=p-q$ such that $\sigma^{p_n}(y_n)=\sigma^{q_n}(y_n)$ and $(y_n(p_n),y_n(q_n))$ is a cycline pair. Fix $p', q' \in \mathbb{N}^k$ with $p'-p=q'-q\in\bN^k$. Then there exists $N \geq 1$ such that 
\begin{align*}
&y_N(p')=x(p'),\ y_N(q')=x(q'),\ \sigma^{p'}(y_N)=\sigma^{q'}(y_N),\\
&z_N(p')=x(p'),\ w_N(q')=x(q'),\ \sigma^{p'}(z_N)=\sigma^{q'}(w_N).
\end{align*}
Since $(y_N(p_N),y_N(q_N))$ is a cycline pair and $p_N \lor p'-q_N \lor q'=p-q$, we obtain a cycline pair
\begin{align*}
&(y_N(p_N)y_N(p_N,p_N \lor p'),y_N(q_N)y_N(q_N,q_N \lor q'))
\\&=(y_N(p')y_N(p',p_N \lor p'),y_N(q')y_N(q',q_N \lor q'))
\\&=(x(p')y_N(p',p_N \lor p'),x(q')y_N(q',q_N \lor q')).
\end{align*}
But we also have that $x(p')\sigma^{p'}(z_N)=z_N \neq w_N=x(q')\sigma^{q'}(w_N)$ which implies that $(x(p'),x(q'))$ is not a cycline pair. Hence $x \in \Lambda^{\infty}_{p,q}$ which is a contradiction. Therefore $\mathrm{Iso}(\mathcal{G}_\Lambda)^{\mathrm{o}}$ is closed.
\end{proof}

As an immediate consequence, Theorem~\ref{P:k-graphiff} generalizes Proposition~\ref{P:IsoE}.

\begin{example}
We consider \cite[Example~4.7]{BNRSW}. Denote by $x:=(e_b e_r)^\infty$. It was shown that $\gamma:=((e_b e_r)^\infty,(1,-1),(e_b e_r)^\infty) \notin \mathrm{Iso}(\mathcal{G}_\Lambda)^{\mathrm{o}}$. For each $p', q' \in \mathbb{N}^k$ with $p' \geq (1,0),q' \geq (0,1), p'-q'=(1,-1)$, let $y:=\beta_b (g_b g_r h_b h_r)^\infty$, then $y$ is not aperiodic, $r(y)=s(x(p'))$ and $x(p')y \neq x(q')y$. This example illustrates that in Condition~(\ref{per and ape exist at the same time}) of Notation~\ref{define Lambda_p,q p neq q},
for $x \in \Lambda^{\infty}_{p,q}$, the infinite path $y \in s(x(p'))\Lambda^{\infty}$ such that $x(p')y \neq x(q')y$ might be periodic.
\end{example}
\appendix
\section{Cartan Subalgebras of Graph Algebras}
\label{DG}

Combining \cite[Corollary~4.5]{BNRSW} and \cite[Theorem~3.6]{MR2927813} we get for any row-finite directed graph without sources $E, \Iso(\Gamma(E^\infty,\sigma))^{\mathrm{o}}$ is closed in the graph groupoid $\Gamma(E^\infty,\sigma)$. In this appendix, we provide a direct proof of this result by investigating the graph groupoid of directed graphs.  Given $\alpha,\beta\in E^*$ with $s(\alpha)=s(\beta)$, denote by $Z(\alpha,\beta):=\mathcal{U}(Z(Z(\alpha),Z(\beta),|\alpha|,|\beta|)$.  The $Z(\alpha,\beta)$ form a basis for the topology on $\Gamma(E^\infty,\sigma)$.

\begin{prop}\label{Z(alpha,beta) cap Iso(Gamma(E^infty,sigma))=0,1}
Let $E$ be a row-finite directed graph without sources, let $\alpha,\beta$ with $|\alpha|\neq |\beta|$. Then $Z(\alpha,\beta)\cap \Iso(\Gamma(E^\infty,\sigma))$ is either empty or a singleton.
\end{prop}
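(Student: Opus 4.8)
The plan is to unwind the definition of $Z(\alpha,\beta)$ and compute the intersection directly. First I would recall that $Z(\alpha,\beta)=\mathcal{U}(Z(\alpha),Z(\beta),|\alpha|,|\beta|)$ consists precisely of the triples $(\alpha z,\,|\alpha|-|\beta|,\,\beta z)$ as $z$ ranges over $s(\alpha)E^\infty$: any element has the form $(x,|\alpha|-|\beta|,y)$ with $x\in Z(\alpha)$, $y\in Z(\beta)$ and $\sigma^{|\alpha|}(x)=\sigma^{|\beta|}(y)$, and setting $z:=\sigma^{|\alpha|}(x)=\sigma^{|\beta|}(y)$ gives $x=\alpha z$, $y=\beta z$ with $r(z)=s(\alpha)=s(\beta)$. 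Intersecting with $\mathrm{Iso}(\Gamma(E^\infty,\sigma))$ imposes range $=$ source, i.e. $\alpha z=\beta z$. Thus the whole statement reduces to: there is at most one $z\in s(\alpha)E^\infty$ with $\alpha z=\beta z$.

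Since $|\alpha|\neq|\beta|$, by symmetry (apply the inversion $(x,m,y)\mapsto(y,-m,x)$, which interchanges the roles of $\alpha$ and $\beta$) I may assume $|\alpha|>|\beta|$. Comparing the first $|\beta|$ edges of the equal infinite paths $\alpha z$ and $\beta z$ forces $\alpha_1\cdots\alpha_{|\beta|}=\beta$, so $\beta$ is the initial segment of $\alpha$; if this fails the intersection is empty. Otherwise write $\alpha=\beta\gamma$ with $\gamma:=\alpha_{|\beta|+1}\cdots\alpha_{|\alpha|}$ of length $d:=|\alpha|-|\beta|>0$. Because $s(\alpha)=s(\beta)$ we have $r(\gamma)=s(\beta)=s(\alpha)=s(\gamma)$, so $\gamma$ is a cycle.

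Next I would show $\alpha z=\beta z$ is equivalent to $\gamma z=z$: substituting $\alpha=\beta\gamma$ and cancelling the common prefix $\beta$ gives exactly $\gamma z=z$. Applying $\sigma^d$ to this yields $\sigma^d(z)=z$, while reading off the first $d$ edges gives $z(d)=\gamma$; iterating, $z(nd)=\gamma^n$ for all $n\geq 1$, which forces $z=\gamma^\infty$. Hence there is at most one solution, namely $z=\gamma^\infty$, which is a legitimate element of $s(\alpha)E^\infty$ precisely because $\gamma$ is a cycle. Therefore $Z(\alpha,\beta)\cap\mathrm{Iso}(\Gamma(E^\infty,\sigma))$ is either empty or the singleton $\{(\alpha\gamma^\infty,\,d,\,\beta\gamma^\infty)\}$.

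The argument is essentially a bookkeeping computation, so I expect no serious obstacle; the one step requiring care is pinning down $z$ uniquely, and this is exactly where the hypothesis $|\alpha|\neq|\beta|$ enters. It guarantees $d>0$, so that $\gamma z=z$ is a genuine periodicity condition with the single solution $\gamma^\infty$. Were $|\alpha|=|\beta|$, the same computation would instead force $\alpha=\beta$ and leave $z$ entirely free, yielding an uncountable intersection; this is precisely why the length condition cannot be dropped.
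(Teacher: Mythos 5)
Your proof is correct and follows essentially the same route as the paper's: reduce the isotropy condition to the equation $\alpha z=\beta z$, use $|\alpha|\neq|\beta|$ to write the longer path as the shorter one followed by a cycle $\gamma$, and conclude $z=\gamma\gamma\cdots$ is the only possible solution. Your additional observations (the explicit parametrization of $Z(\alpha,\beta)$, the empty case when $\beta$ is not a prefix of $\alpha$, and the remark on why $|\alpha|=|\beta|$ must be excluded) are fine but go beyond what the paper records.
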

\begin{proof}
If $Z(\alpha,\beta)\cap \Iso(\Gamma(E^\infty,\sigma))$ is empty, then we are done. Suppose that $Z(\alpha,\beta)\cap \Iso(\Gamma(E^\infty,\sigma))$ is nonempty. Since $|\alpha|\neq |\beta|$, without loss of generality, we assume that $\vert\alpha\vert < \vert\beta\vert$. Fix $(\alpha x,\vert\alpha\vert - \vert\beta\vert,\beta x) \in Z(\alpha,\beta)\cap \Iso(\Gamma(E^\infty,\sigma))$. Then $\alpha x=\beta x$. Since $\vert\alpha\vert < \vert\beta\vert, \beta=\alpha\gamma$, where $\gamma$ is a cycle in $E^{\vert\beta\vert-\vert\alpha\vert}$. So $x=\gamma x$, which implies that $x=\gamma\gamma\cdots$. Therefore $Z(\alpha,\beta)\cap \Iso(\Gamma(E^\infty,\sigma))$ is a singleton.
\end{proof}

\begin{prop}
\label{P:IsoE}
Let $E$ be a row-finite directed graph without sources. Then $\Iso(\Gamma(E^\infty,\sigma))^{\mathrm{o}}$ is closed.
\end{prop}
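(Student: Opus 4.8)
The plan is to prove closedness by showing $\overline{\Iso(\Gamma(E^\infty,\sigma))^{\mathrm{o}}}\subseteq\Iso(\Gamma(E^\infty,\sigma))^{\mathrm{o}}$. Since $\Iso(\Gamma(E^\infty,\sigma))$ is closed, any point of $\overline{\Iso(\Gamma(E^\infty,\sigma))^{\mathrm{o}}}$ already lies in $\Iso(\Gamma(E^\infty,\sigma))$, hence has the form $(x,n,x)$ with $\sigma^p(x)=\sigma^q(x)$ for some $p,q\ge 0$ and $n=p-q$. First I would fix such a point $(x,n,x)\in\overline{\Iso(\Gamma(E^\infty,\sigma))^{\mathrm{o}}}$ and aim to show it already lies in the interior, distinguishing the cases $n=0$ and $n\ne 0$.

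When $n=0$ the point is a unit, and taking $\alpha:=x(m)$ for any $m$ gives a basic open set $Z(\alpha,\alpha)$, which one checks is contained in the unit space and hence in $\Iso(\Gamma(E^\infty,\sigma))$; thus the unit lies in $\Iso(\Gamma(E^\infty,\sigma))^{\mathrm{o}}$. The substantive case is $n\ne 0$, where the idea is that Proposition~\ref{Z(alpha,beta) cap Iso(Gamma(E^infty,sigma))=0,1} is precisely the statement that interior isotropy points with nonzero $\bZ$-coordinate are isolated inside the isotropy bundle. Concretely, I would choose $p\ne q$ with $n=p-q$ and $\sigma^p(x)=\sigma^q(x)$ and set $\alpha:=x(p)$, $\beta:=x(q)$. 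Then $s(\alpha)=s(\beta)$ (both equal $r(\sigma^p(x))=r(\sigma^q(x))$), $|\alpha|=p\ne q=|\beta|$, and $(x,n,x)\in Z(\alpha,\beta)$, so $Z(\alpha,\beta)$ is a basic open neighborhood of $(x,n,x)$ to which Proposition~\ref{Z(alpha,beta) cap Iso(Gamma(E^infty,sigma))=0,1} applies. Because $(x,n,x)$ lies in the closure of $\Iso(\Gamma(E^\infty,\sigma))^{\mathrm{o}}$, this neighborhood meets $\Iso(\Gamma(E^\infty,\sigma))^{\mathrm{o}}$ in some point $\delta$. Both $\delta$ and $(x,n,x)$ belong to $Z(\alpha,\beta)\cap\Iso(\Gamma(E^\infty,\sigma))$, which Proposition~\ref{Z(alpha,beta) cap Iso(Gamma(E^infty,sigma))=0,1} forces to be a singleton; hence $\delta=(x,n,x)$, and since $\delta\in\Iso(\Gamma(E^\infty,\sigma))^{\mathrm{o}}$ we conclude $(x,n,x)\in\Iso(\Gamma(E^\infty,\sigma))^{\mathrm{o}}$.

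I expect the real work to sit entirely in the preparatory Proposition~\ref{Z(alpha,beta) cap Iso(Gamma(E^infty,sigma))=0,1} rather than here: once one knows that $Z(\alpha,\beta)\cap\Iso(\Gamma(E^\infty,\sigma))$ is a singleton whenever $|\alpha|\ne|\beta|$, closedness is a short topological observation, namely that a closure point lying inside such a singleton must coincide with the interior point witnessing the intersection. The only point requiring care is the reduction that makes that proposition applicable: one must select a basic neighborhood with $|\alpha|\ne|\beta|$, which is exactly why the unit case $n=0$ has to be peeled off and treated separately, and one must invoke that $\Iso(\Gamma(E^\infty,\sigma))$ is closed so that the closure of the interior cannot escape the isotropy bundle.
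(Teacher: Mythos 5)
Your proposal is correct and follows essentially the same route as the paper's proof: split off the unit case (where the paper invokes \'etaleness and you explicitly exhibit $Z(\alpha,\alpha)$ inside the unit space), then for $n\neq 0$ pick $\alpha=x(p)$, $\beta=x(q)$ with $|\alpha|\neq|\beta|$ and use Proposition~\ref{Z(alpha,beta) cap Iso(Gamma(E^infty,sigma))=0,1} to force the closure point to coincide with an interior point of the isotropy. The only cosmetic difference is that the paper phrases the argument with a convergent sequence $\gamma_i\to\gamma$ while you argue directly with neighborhoods of a closure point; the content is identical.
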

\begin{proof}
Suppose that $\gamma_i\to \gamma$ with $\gamma_i\in\Iso(\Gamma(E^\infty,\sigma))^{\mathrm{o}}$. Since $\Iso(\Gamma(E^\infty,\sigma))$ is closed, $\gamma \in \Iso(\Gamma(E^\infty,\sigma))$. If $\gamma \in \Gamma(E^\infty,\sigma)^{(0)}$ then $\gamma \in \Iso(\Gamma(E^\infty,\sigma))^{\mathrm{o}}$ because $\Gamma(E^\infty,\sigma)$ is \'{e}tale. Now suppose  $\gamma \notin \Gamma(E^\infty,\sigma)^{(0)}$. Then there exist $\alpha,\beta$ with $\vert\alpha\vert\neq\vert\beta\vert$ such that $\gamma \in Z(\alpha,\beta)$. Since $\gamma_i \to \gamma$, there exists $i_0 \geq 1$ such that $\gamma_{i_0} \in Z(\alpha,\beta)$. By Proposition~\ref{Z(alpha,beta) cap Iso(Gamma(E^infty,sigma))=0,1}, $Z(\alpha,\beta) \cap \Iso(\Gamma(E^\infty,\sigma))$ is a singleton. So $\gamma=\gamma_{i_0} \in \Iso(\Gamma(E^\infty,\sigma))^{\mathrm{o}}$. Hence $\Iso(\Gamma(E^\infty,\sigma))^{\mathrm{o}}$ is closed.
\end{proof}

\section*{Acknowledgments}

This work was initiated during D. Yang's visiting J. Brown in October 2015. D. Yang thanks J. Brown's invitation and warm hospitality. H. Li thanks J. Brown and D. Yang for the valuable discussions and their patient correspondences.

\end{document}